\renewcommand{\baselinestretch}{\baselinestretch}
\renewcommand{\baselinestretch}{1.1}
\numberwithin{equation}{section}
\newcommand{\z}{\mathbb Z}
\newcommand{\norm}{\mathfrak N}
\newcommand{\x}{\mathbf x}
\newcommand{\be}{\mathbf e}
\newcommand{\bv}{\mathbf v}
\newcommand{\bw}{\mathbf w}
\newcommand{\bu}{\mathbf u}
\newcommand{\oc}{\overline{c}}
\newcommand{\od}{\overline{\alpha}}
\newcommand{\p}{\mathfrak P}
\newcommand{\gen}{\text{gen}}
\newtheorem{thm}{Theorem}[section]
\newtheorem{lem}[thm]{Lemma}
\newtheorem{cor}[thm]{Corollary}
\newtheorem{prop}[thm]{Proposition}
\theoremstyle{definition}
\newtheorem{defn}[thm]{Definition}
\theoremstyle{remark}
\numberwithin{equation}{section}
\begin{document}

\title{On a Waring's problem for integral quadratic and hermitian forms}

\author{Constantin N. Beli}
\address{Institute of Mathematics Simion Stoilow of the Romanian Academy, Calea Grivitei 21, RO-010702 Bucharest, Romania}
\email{Constantin.Beli@imar.ro}

\author{Wai Kiu Chan}
\address{Department of Mathematics and Computer Science, Wesleyan University, Middletown CT, 06459, USA}
\email{wkchan@wesleyan.edu}

\author{Mar\'ia In\'es Icaza}
\address{Instituto de Matem\'atica y F\'isica, Universidad de Talca, Casilla 747, Talca, Chile.}
\email{icazap@inst-mat.utalca.cl}

\author{Jingbo Liu}
\address{Department of Mathematics, University of Hong Kong, Pokfulam Road, Hong Kong}
\email{jliu02@hku.hk}

\subjclass[2010]{Primary 11E12, 11E25, 11E39}

\keywords{Waring's problem, Sums of squares, Sums of norms}


\begin{abstract}
For each positive integer $n$, let $g_\z(n)$ be the smallest integer such that if an integral quadratic form in $n$
variables can be written as a sum of squares of integral linear forms, then it can be written as a sum of $g_\z(n)$
squares of integral linear forms.  We show that as $n$ goes to infinity, the growth of $g_\z(n)$ is at most an exponential of $\sqrt{n}$.
Our result improves the best known upper bound on $g_\z(n)$ which is in the order of an exponential of $n$.
We also define an analogous number $g_{\mathcal O}^*(n)$ for writing hermitian forms over the ring of integers $\mathcal O$ of
an imaginary quadratic field as sums of norms of integral linear forms, and when the class number of the imaginary quadratic field is 1,
we show that the growth of $g_{\mathcal O}^*(n)$ is at most an exponential of $\sqrt{n}$. We also improve results of Conway-Sloane \cite{cs} and
Kim-Oh \cite{Kim-Oh05} on $s$-integral lattices.
\end{abstract}

\maketitle

\section{Introduction} \label{introduction}

Representations of integers as sums of (integer) squares is a question which has piqued the interest of many mathematicians for centuries.
Because of the well-known work of Fermat, Euler, Legendre, and Lagrange on sums of squares, it is now known that every positive integer is
a sum of at most four squares.  This result has been generalized in many different ways.  For example,  the {\em Pythagoras number} of
a ring $R$ is the smallest positive integer $p = p(R)$ such that every sum of squares of elements of $R$ is already a sum of $p$ squares
of elements of $R$.  The results we just mentioned about sums of squares is tantamount to saying that the Pythagoras number of $\z$ is 4.

There is also a higher dimensional generalization in terms of representations of integral quadratic forms.
An integral quadratic form $g(\mathbf y)$ in variables $\mathbf y = (y_1, \ldots, y_m)$ is said to be represented by
another integral quadratic form $f(\mathbf x)$ in variables $\mathbf x = (x_1, \ldots, x_n)$ ($n \geq m$) if there exists
an $m \times n$ integral matrix $T$ such that $g(\mathbf y) = f(\mathbf y T)$.  For the sake of convenience, for any positive integer
$r$ we denote the quadratic form $x_1^2 + \cdots + x_r^2$ by its Gram
matrix $I_r$.  In general, a quadratic form is represented by $I_r$ if and only if it is the sum of $r$ squares of integral linear forms.  In this context, Lagrange's Four-Square Theorem simply says that every unary positive definite integral quadratic form is represented by $I_4$.  Mordell \cite{Mordell30} made the first step of generalization along this direction by proving that every positive
definite binary integral quadratic form is represented by $I_5$, and
that $5$ is the smallest number with this property.\footnote{In a
recent short  note \cite{Schinzel13}, A. Schinzel has reported and
filled in a gap in Mordell's proof.}
In the same paper, he posed the following what he called a {\em new Waring's Problem}: can every positive definite integral quadratic
form in $n$ variables be written as a sum of $n + 3$ squares of integral linear forms?  This was proven to be true when
$n \leq 5$ by Ko \cite{Ko37}, but around the same time Mordell \cite{Mordell37} showed that the 6-variable quadratic form
corresponding to the root system $E_6$ cannot be represented by {\em any} sum of squares.   Later Ko \cite{Ko39} showed
further that up to equivalence this 6-variable quadratic form is the only counterexample among all positive definite
6-variable integral quadratic forms.  This leads to the consideration of the set $\Sigma_\z(n)$ of all  integral
quadratic forms in $n$ variables that can be represented by some sums of squares and the following definition of
the ``$g$-invariants" of $\z$:
$$g_\z(n) := \min\{g : \mbox{ every quadratic form in } \Sigma_\z(n) \mbox{ is represented by } I_g \}.$$
All the results mentioned thus far can be summarized as $g_\z(n) = n + 3$ for $n \leq 5$.  Ko \cite{Ko39}
conjectured that $g_\z(6) = 9$, but this is disproved almost sixty years later by Kim-Oh \cite{Kim-Oh97}
who show that $g_\z(6)$ is actually equal to 10.  This is the last known exact value of $g_\z(n)$, although
explicit upper bounds for $g_\z(n)$ for $n \leq 20$ have been found; see \cite{Kim-Oh02} and \cite{Sasaki00}.

The definition of the $g$-invariants of $\z$ is a special case of the $g$-invariants of a commutative ring $A$ first
appeared in the third author's paper \cite{Icaza96}, although the special case where $A$ is a field is studied earlier
in \cite{BLOP86}.  Using a deep result in representations of positive definite integral quadratic forms \cite{HKK77},
the third author \cite{Icaza96} shows that when $\mathcal O$ is the ring of integers of a totally real number field,
$g_{\mathcal O}(n)$ is bounded above by a function of $n$ which is at least an exponential function of the class number
of $I_{n+3}$ as a quadratic form over $\mathcal O$.   The latter is expected to be growing extremely fast as $n$ increases.
Indeed, in the special case $\mathcal O = \z$, the class number of $I_{n+3}$ is at least in the order of $n^{n^2}$ by
considering the mass of the genus of $I_{n+3}$ \cite{MH73}.  A much better upper bound $g_\z(n) = O(3^{n/2}n \log n)$
is later obtained by Kim-Oh \cite{Kim-Oh05} which is the best upper bound on $g_\z(n)$ so far.

In this paper, we will improve the upper bound on $g_\z(n)$ by showing that its growth is at most an exponential of $\sqrt{n}$.
Before introducing the precise statement of this result, we note that our method applies to the analogous problem of representing
integral hermitian forms over an imaginary quadratic field by sums of norms.  Let $E$ be an imaginary quadratic field, $*$ be
its nontrivial Galois automorphism, and $\mathcal O$ be its ring of integers.  Let $I_r$ be the  hermitian form $x_1x_1^* + \cdots + x_rx_r^*$
over $\mathcal O$, the sum of $r$ norms.  For any positive integer $n$, let $\Sigma_{\mathcal O}^*(n)$ be the set of integral hermitian forms
over $\mathcal O$ in $n$ variables that are represented by sums of norms.  We define
$$g_{\mathcal O}^*(n) = \min\{g : \mbox{ every hermitian form in } \Sigma_{\mathcal O}^*(n) \mbox{ is represented by } I_g \}.$$
The finiteness of $g_{\mathcal O}^*(n)$ can be proved in the same way as $g_\z(n)$.  A proof of this is presented in the Appendix
when $E$ is replaced by an arbitrary CM field and the hermitian forms by the more general notion of hermitian $\mathcal O$-lattices.
To unify the discussion of the hermitian and quadratic cases together,  we allow the pair $(E, *)$ to be $(\mathbb Q, 1_{\mathbb Q})$
in the subsequent discussion.  In particular, a quadratic form over $\z$ is just a hermitian form over $\z$ with respect to the trivial
automorphism, and  $g_\z(n)$ is simply $g_\z^*(n)$ as a consequence.  Our main result can be stated as

\begin{thm}\label{main}
Let $E$ be $\mathbb Q$ or an imaginary quadratic field with class
number 1, and $\mathcal O$ be its ring  of integers.  Then for any
$\varepsilon >0$ we have
$$g_{\mathcal O}^*(n)=O(e^{(k_E+\varepsilon )\sqrt{n}}),$$
where $k_E$ is a constant depending on the field $E$.
\end{thm}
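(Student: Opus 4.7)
My plan is to prove the theorem by reducing the representation of a general $L\in\Sigma_{\mathcal O}^*(n)$ to that of a bounded number of pieces each of rank roughly $\sqrt n$, and then applying to each piece the rank-$m$ bound — which, combined with the improved results on $s$-integral lattices flagged in the abstract, is Kim-Oh's estimate of order $c_E^m$ on $g_{\mathcal O}^*(m)$.

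First, fix $L\in\Sigma_{\mathcal O}^*(n)$, so $L$ embeds isometrically in $I_N$ for some $N$. Set $m=\lceil\sqrt n\,\rceil$. The key step is to produce an orthogonal splitting
\[
L = M_1\perp\cdots\perp M_t,\qquad t\le \lceil n/m\rceil,\quad \mathrm{rank}(M_j)\le m.
\]
Granted such a splitting, each $M_j$ inherits an embedding in $I_N$ and hence lies in $\Sigma_{\mathcal O}^*(m)$, so Kim-Oh yields $M_j\hookrightarrow I_{g_j}$ with $g_j=O(c_E^{\sqrt n}\cdot\mathrm{poly}(\sqrt n))$. Concatenating these embeddings gives $L\hookrightarrow I_G$ with $G\le t\cdot\max_j g_j=O(e^{(k_E+\varepsilon)\sqrt n})$, once the polynomial factors are absorbed into $\varepsilon$.

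Producing the splitting is the central obstacle, since a general integral lattice need not split orthogonally into small pieces (indecomposable sublattices of $I_N$ such as $D_n$ already rule this out). I would bypass the issue via a dichotomy keyed to the minimum $\mu(L)$. If $\mu(L)$ exceeds an explicit threshold $s(n)=e^{O(\sqrt n)}$, the improved representation bound for $s$-integral lattices established earlier in the paper applies directly and yields $L\hookrightarrow I_g$ with $g$ of the right shape. Otherwise $L$ contains a vector $v$ of norm below $s(n)$; the class-number-one hypothesis on $\mathcal O$ ensures that (up to a bounded index) $v$ generates a primitive rank-one $\mathcal O$-sublattice whose orthogonal complement $v^\perp\cap L$ is itself a direct summand, onto which one iterates. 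After at most $n-m$ such peelings the residual rank is at most $m$, and a final application of Kim-Oh at rank $m$ closes the rank-reduction branch.

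The hard part is tracking the arithmetic loss that accumulates during peeling: each rank-one split introduces a local obstruction — a divisor or index factor — that must be absorbed either into a bounded number of extra norms or into a constant inflation of $s(n)$. In the rational case these factors are controlled by Minkowski-style reduction of positive definite forms; in the imaginary quadratic case the assumption that $\mathcal O$ is a PID is essential, since it makes the saturation of a short primitive vector genuinely principal and its orthogonal complement a true direct summand, eliminating the ideal-class bookkeeping that would otherwise inflate the recursion constants. Optimising $s(n)$ against $m$ then produces the announced exponent $k_E\sqrt n$, with $k_E$ depending on $E$ through Kim-Oh's constant and local splitting data at the primes of $E$.
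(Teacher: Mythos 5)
There is a genuine gap, in fact two. First, your high-minimum branch assumes exactly the theorem's technical heart. The statement ``if $\mu(L)\geq e^{O(\sqrt n)}$ then $L$ is represented by $I_g$ with $g$ polynomial in $n$'' is not an available input: the improved $s$-integrability bound you invoke is, in this paper, a \emph{corollary} of that very statement (Proposition \ref{prop1}), so citing it is circular, and no result in the prior literature gives a threshold of size $e^{O(\sqrt n)}$ (the Kim--Oh-type thresholds are exponential in $n$, and Kim--Oh's bound itself is only proved over $\mathbb Z$, not over imaginary quadratic $\mathcal O$). Proving this threshold statement is where all the work lies: the paper does it by constructing a \emph{balanced} HKZ reduction $M=X^*HX$ in which the entries of both $X$ and $X^{-1}$ are bounded by $e^{O(\sqrt{\,j-i\,})}$ (Lemmas \ref{xy}--\ref{cm}, via factoring a unimodular change of basis as a product $\exp(Z_1)\cdots\exp(Z_{n-1})$ with $Z_k$ supported on the $k$-th superdiagonal), then writing $M=A+S+P^*P$ with $A$ a large diagonal part, $P$ integral, $S$ a small integral error, and representing $A+S$ by $I_{O(n^2)}$ through a decomposition into rank-two blocks handled by the $2$-universality of $I_5$ (over $\mathbb Q$) or by the neighbor method for $I_3$ after scaling by $2^\sigma$ (Lemmas \ref{rank2} and \ref{apluss}). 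None of this is present or replaceable by a citation in your sketch, and it is precisely this machinery that produces the $\sqrt n$ in the exponent.

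Second, the low-minimum ``peeling'' branch does not work as stated. If $v\in L$ is a short vector, then $\mathcal Ov\perp(v^\perp\cap L)$ is in general only a finite-index sublattice of $L$, even when $\mathcal O$ is a PID; $L$ itself need not split orthogonally, and representing the two pieces by sums of norms does not represent $L$. The ``index factor'' you hope to absorb is exactly the glue between the pieces, and there is no bounded-cost way to absorb it; this is the same obstruction ($D_n$-type indecomposability) you flagged at the outset, and the dichotomy does not remove it. The paper's rank reduction (Proposition \ref{prop2}) avoids lattice splitting entirely: it works directly with the given expression $f=\sum_j\norm(\ell_j)$, observes that since $\mu(f)=f(1,0,\ldots,0)<G_E(n)$ at most $\lfloor G_E(n)\rfloor$ of the $\ell_j$ can involve $x_1$, and replaces the remaining forms (an $(n-1)$-variable form in $\Sigma^*_{\mathcal O}(n-1)$) by a representation using $g^*_{\mathcal O}(n-1)$ norms, giving $g^*_{\mathcal O}(n)\leq g^*_{\mathcal O}(n-1)+G_E(n)$ and hence $g^*_{\mathcal O}(n)\leq nG_E(n)=O(e^{(k_E+\varepsilon)\sqrt n})$; no splitting into rank-$\sqrt n$ blocks occurs anywhere, and the $\sqrt n$ comes from the balanced reduction, not from the recursion.
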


Explicitly, the constant $k_E$ appeared in Theorem \ref{main} is equal to $(4 + 4\sqrt{2})\sqrt{\beta_E}$, where $\beta_E$ is defined as
\begin{equation} \label{betaE}
\beta_E := \sup_{x\in E} \inf_{c \in \mathcal O} \vert \text{N}_{E/\mathbb Q}(x - c)\vert^{\frac{1}{[E:\mathbb Q]}},
\end{equation}
where $\text{N}_{E/\mathbb Q}$ is the norm from $E$ to $\mathbb Q$.
In literature, the number $\beta_E^{[E: \mathbb Q]}$ is called the Euclidean minimum of $E$.  It is clear from the definition that $\beta_{\mathbb Q} = \frac{1}{2}$.  The exact value of $\beta_E$ for a general imaginary quadratic field $E = \mathbb Q(\sqrt{-\ell})$, $\ell > 0$ squarefree, can be
easily deduced from a result of Dirichlet \cite{Dirichlet} (see also the survey \cite{Lem}):
$$\beta_E^2 = \begin{cases}
\frac{\ell + 1}{4} & \mbox{ if $\ell \equiv 1$ or $2$ mod 4};\\
\frac{(\ell + 1)^2}{16\ell} & \mbox{ if $\ell \equiv 3$ mod 4}.
\end{cases}$$

Let $\vert\,\,\vert$ be the valuation on $E$ that is the usual absolute value when restricted on $\mathbb Q$, and $\mathbb K$ be the completion of $E$ with respect to $\vert\,\,\vert$.    Let $\mathcal C_E$ be the set $\{z\in \mathbb K : \vert z \vert \leq \beta_E \}$.   As a straightforward consequence from the definition of $\beta_E$ and the denseness of $E$ in $\mathbb K$, we see that given any $z \in \mathbb K$, there exist $a \in \mathcal O$ and $\eta \in \mathcal C_E$ such that $z = a + \eta$.

We will show that every positive definite hermitian form over any one of those fields in the theorem is integrally equivalent to
a what we call ``balanced Hermite-Korkin-Zolotarev (HKZ) reduced" hermitian form, which will be crucial in obtaining our upper bounds
for $g_{\mathcal O}^*(n)$.  Our method does not apply to general imaginary quadratic fields.  However, an upper bound for
$g_{\mathcal O}^*(n)$ when $E$ is an arbitrary imaginary quadratic field, which is in the order of an exponential of $n$,
is obtained by the fourth author in her doctoral thesis \cite{Liu16}.

The rest of the paper is organized as follows.  In Section 2 we introduce the weakly reduced hermitian forms,
and establish upper bounds on the entries of the diagonal part of the Gram matrix of a weakly reduced hermitian form.
Section 3 contains some technical lemmas which will lead to the balanced HKZ reduced hermitian forms.  In Section \ref{neighbors} we will recall
some results on the theory of neighbors of hermitian forms due to Schiemann \cite{Sch}.    The derivation
of the final upper bound on $g_{\mathcal O}^*(n)$ will be presented in Section \ref{mainresults}.  In Section \ref{conway} we will apply our results to obtain a much improved lower bound for a function $\phi(s)$ defined using the $s$-integrable lattices, which were introduced by Conway and Sloane in \cite{cs}.  An Appendix is given at the end to provide a proof of the finiteness of $g_{\mathcal O}^*(n)$ when $E$ is a CM extension of a totally real number field.

\section{Weakly reduced hermitian forms}

We start with the more general assumption that $\mathcal O$ is a PID with field of fractions $E$ and $V$ is an $n$-dimensional vector space over $E$.  Let $\Lambda$ be an $\mathcal O$-lattice on $V$, that is, a finitely generated $\mathcal O$-module in $V$ such that $E\Lambda = V$.   Let $\bv_1$ be a primitive vector in $\Lambda$ and $V_2$ be a subspace of $V$ such that $V = E\bv_1 \oplus V_2$.  If $p_2: V \longrightarrow V_2$ is the projection of $V$ onto $V_2$ along the subspace $E\bv_1$, then $p_2(\Lambda)$ is a lattice on $V_2$.

\begin{lem} \label{martinet}
If $\bv_2, \ldots, \bv_n$ is a basis of $p_2(\Lambda)$ and for $2\leq i \leq n$, $\bu_i \in \Lambda$ is chosen such that $p_2(\bu_i) = \bv_i$, then $\bv_1, \bu_2 \ldots, \bu_n$ is a basis of $\Lambda$
\end{lem}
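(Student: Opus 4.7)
The plan is to verify the two defining properties of a basis: $\mathcal{O}$-linear independence of the proposed vectors, and that they generate $\Lambda$ as an $\mathcal{O}$-module.

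For linear independence, I would suppose $a_1 \bv_1 + a_2 \bu_2 + \cdots + a_n \bu_n = 0$ with $a_i \in \mathcal{O}$, and apply the projection $p_2$. Since $p_2(\bv_1) = 0$ and $p_2(\bu_i) = \bv_i$, this yields $a_2 \bv_2 + \cdots + a_n \bv_n = 0$ in $V_2$. Because $\bv_2, \ldots, \bv_n$ is a basis of $p_2(\Lambda)$, they are in particular $E$-linearly independent, forcing $a_2 = \cdots = a_n = 0$. The original relation then reduces to $a_1 \bv_1 = 0$, and $\bv_1 \neq 0$ gives $a_1 = 0$.

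For the spanning property, take any $\bw \in \Lambda$. Then $p_2(\bw) \in p_2(\Lambda)$, so there exist unique $a_2, \ldots, a_n \in \mathcal{O}$ with $p_2(\bw) = a_2 \bv_2 + \cdots + a_n \bv_n = p_2(a_2\bu_2 + \cdots + a_n \bu_n)$. Setting $\bw' := \bw - (a_2 \bu_2 + \cdots + a_n \bu_n) \in \Lambda$, we get $p_2(\bw') = 0$, so $\bw' \in \ker(p_2) \cap \Lambda = E\bv_1 \cap \Lambda$, and consequently $\bw' = c\bv_1$ for some $c \in E$.

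The key step, and the one place where primitivity is needed, is to conclude $c \in \mathcal{O}$. Since $\mathcal{O}$ is a PID, the $\mathcal{O}$-module $E\bv_1 \cap \Lambda$ is a rank-one torsion-free submodule of $E\bv_1$, hence of the form $\mathfrak{a}\bv_1$ for a fractional ideal $\mathfrak{a}$ of $\mathcal{O}$; write $\mathfrak{a} = d\mathcal{O}$ for some $d \in E$. Because $\bv_1 \in \Lambda$ lies in this module, $d^{-1} \in \mathcal{O}$, and because $\bv_1$ is primitive in $\Lambda$, writing $\bv_1 = d^{-1}(d\bv_1)$ with $d\bv_1 \in \Lambda$ forces $d^{-1}$ to be a unit of $\mathcal{O}$. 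Therefore $E\bv_1 \cap \Lambda = \mathcal{O}\bv_1$, and $\bw' = a_1 \bv_1$ for some $a_1 \in \mathcal{O}$, completing the expression $\bw = a_1\bv_1 + a_2\bu_2 + \cdots + a_n \bu_n$ with all coefficients in $\mathcal{O}$. The only subtlety in the argument is precisely this reliance on the primitivity hypothesis to rule out denominators in the $\bv_1$-direction; everything else is a direct application of the projection $p_2$.
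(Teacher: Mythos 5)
Your proof is correct, and since the paper gives no argument of its own here (it simply cites Lemma 2.2.4 of Martinet and notes that the proof carries over), your projection-and-lift argument is exactly the standard proof that citation refers to: apply $p_2$ to get the coefficients along $\bu_2,\ldots,\bu_n$, then handle the leftover vector in $\ker p_2\cap\Lambda$. You also correctly isolate the one delicate point, namely that $E\bv_1\cap\Lambda$ is principal because $\mathcal O$ is a PID and equals $\mathcal O\bv_1$ because $\bv_1$ is primitive, which is precisely what rules out denominators in the $\bv_1$-direction.
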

\begin{proof}
This is essentially \cite[Lemma 2.2.4]{Martinet} whose proof works in our slightly more general situation.
\end{proof}

Now, let us restrict our discussion to the case when $E$ is either $\mathbb Q$ or one of the nine imaginary quadratic fields with class number 1.  Let $*$ be the nontrivial automorphism of $E$ if $E$ is not $\mathbb Q$, and the identity map if $E$ is $\mathbb Q$.  From now on, a hermitian
form over $E$ is either a quadratic form when $E = \mathbb Q$ or a hermitian form with respect to $*$ if $E$ is not $\mathbb Q$.
If $f$ is a hermitian form in variables $x_1, \ldots, x_n$ over $E$, it can be written as $\sum_{1\leq i, j \leq n} a_{ij}x_ix_j^*$ where
$a_{ij} = a_{ji}^* \in E$.  The matrix $(a_{ij})$ is called the {\em Gram matrix} associated to $f$.  The discriminant of $f$,
denoted $d(f)$, is the determinant of the Gram matrix associated to $f$.  Let $\mu(f)$ be the (nonzero) minimum of $f$, i.e.,
$\mu(f) = \min\{f(\x): 0 \neq \x \in \mathcal O^n\}$.

If $a_1x_1 + \cdots + a_nx_n$ is a linear form over $E$,  we define
$$\norm(a_1x_1 + \cdots + a_nx_n) := (a_1x_1 + \cdots + a_nx_n)(a_1^*x_1^* + \cdots + a_n^*x_n^*),$$
which is a hermitian form over $E$.  For any matrix $A = (a_{ij})$ with entries in $E$, $A^* = (a_{ji}^*)$ denotes its ``conjugate transpose".

\begin{defn}
A positive definite hermitian form $f(x_1, \ldots, x_n)$ over $E$ is said to be weakly reduced if
$$f(x_1, \ldots, x_n)=\displaystyle\sum_{i=1}^nh_i \,\norm\left(x_i+\sum_{j=i+1}^nt_{ij}x_j\right),$$
with
\begin{equation}\label{hi}
h_i=\displaystyle\min_{(x_i,...,x_n)\in\mathcal{O}^{n-i+1}\setminus\{0\}}\displaystyle\sum_{j=i}^nh_j \, \norm\left(x_j+\sum_{k=j+1}^nt_{jk}x_k\right).
\end{equation}
\end{defn}

\begin{prop} \label{kz}
Every positive definite hermitian form over $E$ is integrally equivalent to a weakly reduced hermitian form.
\end{prop}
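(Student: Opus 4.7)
The plan is to induct on $n$, the number of variables. The base case $n=1$ is trivial: every positive definite $f(x_1)=h_1\norm(x_1)$ is already weakly reduced, since $\norm(x_1)\geq 1$ for every nonzero $x_1\in\mathcal O$. For the inductive step, realize $f$ as the Gram form of a rank-$n$ $\mathcal O$-lattice $\Lambda$ in a hermitian $E$-space $V$, and choose $\bv_1\in\Lambda$ with $f(\bv_1)=\mu(f)=:h_1$; such a $\bv_1$ exists because $f$ is positive definite and $f(\Lambda\setminus\{0\})$ is a discrete subset of $\mathbb R_{>0}$. I claim $\bv_1$ is primitive in $\Lambda$: if $\bv_1=c\bw$ with $c\in\mathcal O$ a non-unit then $\norm(c)\geq 2$, forcing $f(\bw)<\mu(f)$, a contradiction.

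Let $V_2$ be the orthogonal complement of $E\bv_1$ in $V$ with respect to the hermitian form, and $p_2\colon V\to V_2$ the orthogonal projection. Then $p_2(\Lambda)$ is a rank-$(n-1)$ lattice on $V_2$, and by the inductive hypothesis applied to the restriction of $f$ to $V_2$, there exists a basis $\bw_2,\ldots,\bw_n$ of $p_2(\Lambda)$ in which
$$f|_{V_2}(x_2,\ldots,x_n)=\sum_{i=2}^n h_i\,\norm\Bigl(x_i+\sum_{j>i}t_{ij}x_j\Bigr)$$
is weakly reduced. Because $\bv_1$ is primitive, Lemma~\ref{martinet} lets me lift $\bw_2,\ldots,\bw_n$ to $\bu_2,\ldots,\bu_n\in\Lambda$ with $p_2(\bu_i)=\bw_i$, and then $\bv_1,\bu_2,\ldots,\bu_n$ is a basis of $\Lambda$. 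Writing $\bu_i=\alpha_i\bv_1+\bw_i$ with $\alpha_i\in E$, the orthogonality of $\bv_1$ to $V_2$ yields, for a general $\bv=x_1\bv_1+\sum_{i\geq 2}x_i\bu_i$,
$$f(\bv)=h_1\,\norm\Bigl(x_1+\sum_{j\geq 2}t_{1j}x_j\Bigr)+\sum_{i\geq 2}h_i\,\norm\Bigl(x_i+\sum_{j>i}t_{ij}x_j\Bigr),$$
where $t_{1j}:=\alpha_j$. This is exactly the shape demanded by the definition.

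It remains to verify the minimum conditions \eqref{hi}. For $i=1$ the tail form is $f$ itself, and $h_1=\mu(f)$ holds by construction of $\bv_1$. For $i\geq 2$, the key observation is that the tail form $\sum_{j\geq i} h_j\norm(x_j+\sum_{k>j}t_{jk}x_k)$ in the variables $x_i,\ldots,x_n$ is, term for term, identical to the $i$-th tail of the weakly reduced form $f|_{V_2}$; the inductive hypothesis therefore supplies exactly the required minimality. The main obstacle in the argument is precisely this bookkeeping step — reconciling the geometric picture (minimum vector, orthogonal projection, basis extension via Lemma~\ref{martinet}) with the algebraic minimum condition \eqref{hi} on each tail. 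Everything else (discreteness of the set of values of $f$, primitivity of a minimum vector, and the fact that $p_2(\Lambda)$ is a lattice) is standard over a PID and poses no difficulty.
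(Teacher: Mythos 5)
Your proof is correct and follows essentially the same route as the paper: choose a minimal vector $\bv_1$, project orthogonally onto the complement of $E\bv_1$, lift a recursively reduced basis of $p_2(\Lambda)$ back to $\Lambda$ via Lemma~\ref{martinet}, and read off the conditions \eqref{hi} from the identity of the tails; the paper merely phrases the recursion as an iterated construction of an orthogonal basis $\bv_1,\ldots,\bv_n$ with $\bv_i$ minimal in $p_i(\Lambda)$ rather than as a formal induction on $n$. Your explicit check that a minimal vector is primitive is a detail the paper leaves implicit.
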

\begin{proof}
Our proof is similar to the strategy outlined in \cite[Page 60]{Martinet} for the case $E = \mathbb Q$.  Let $f(x_1, \ldots, x_n)$ be a hermitian
form over $E$ and $M$ be its Gram matrix.  Let $V$ be an $n$-dimensional vector space over $E$ with a basis $\{\be_1, \ldots, \be_n\}$.
We view $V$ as a hermitian space with the positive definite hermitian map $h: V\times V \longrightarrow E$ such that $h(\be_i, \be_j)$
is the $(i, j)$ entry of $M$. Let $\Lambda$ be the $\mathcal O$-lattice spanned by $\be_1, \ldots, \be_n$.  For any $\bv = x_1\be_1 + \cdots + x_n\be_n \in \Lambda$, $h(\bv, \bv)$ is equal to $f(x_1, \ldots, x_n)$.   We shall prove that $\Lambda$ has a basis with respect to which $h(\bv, \bv)$ is a weakly reduced hermitian form.

Let $0 \neq \bv_1 \in \Lambda$ be a minimal vector of $\Lambda$, that is, $h(\bv_1, \bv_1)$ is equal to $\mu(f)$, and $V_2$ be the orthogonal complement of $E\bv_1$ in $V$.  Using Lemma \ref{martinet} and an induction argument, we obtain an orthogonal basis $\bv_1, \ldots, \bv_n$ of $V$ and a basis $\bu_1, \ldots, \bu_n$ of $\Lambda$ such that $\bu_1 = \bv_1$,  $\bv_i$ is a minimal vector of $p_i(\Lambda)$ for $2 \leq i \leq n$, where $p_i$ is the orthogonal projection of $V$ onto the orthogonal complement of $\text{span}(\bv_1, \ldots, \bv_{i-1})$, and $p_i(\bv_i) = \bu_i$.  Moreover, for $2\leq i \leq n$,
$$\bu_i = \bv_i + \sum_{j = 1}^{i-1} t_{ji}\bv_j, \quad t_{ij} \in E.$$

Every vector $\bv$ in $\Lambda$ is of the form $\bv = \sum_{i=1}^n x_i \bu_i$, where $x_i \in \mathcal O$ for all $i$, which can be re-written as
\begin{equation} \label{formofv}
\bv = \sum_{i=1}^n \left(x_i + \sum_{j = i+1}^n t_{ij}x_j\right)\bv_i.
\end{equation}
Let $h_i = h(\bv_i, \bv_i)$.  Then, since $\bv_1, \ldots, \bv_n$ is an orthogonal basis, we have
\begin{equation*} 
h(\bv, \bv) = \sum_{i=1}^n h_i \,\norm\left(x_i + \sum_{j = i + 1}^n t_{ij}x_j\right).
\end{equation*}
It remains to show that each $h_i$ satisfies condition \eqref{hi} in the definition of weakly reduced hermitian forms.

Since a typical element of $\Lambda$ is of the form given by \eqref{formofv},  a typical element of $p_{i}(\Lambda )$ has the form $\bv'=\sum_{j=i}^n(x_j+\sum_{k=j+1}^nt_{jk}x_k)\bv_j$, with
$h(\bv',\bv')=\sum_{j=i}^nh_j\norm(x_j+\sum_{k=j+1}^nt_{jk}x_k)$. Therefore,
$$h_i = h(\bv_i, \bv_i) = \min_{\bv' \in p_{i}(\Lambda)\setminus\{0\}} h(\bv', \bv') = \displaystyle\min_{(x_i,...,x_n)\in\mathcal{O}^{n-i+1}\setminus\{0\}}\displaystyle\sum_{j=i}^nh_j \, \norm\left(x_j+\sum_{k=j+1}^nt_{jk}x_k\right).$$
This completes the proof of the proposition.
\end{proof}

The Gram matrix of a weakly reduced hermitian form is of the form $X^* H X$, where $H$ is a diagonal matrix whose entries are defined by
\eqref{hi} and $X$ is a upper triangular unipotent matrix.   In the case of $E = \mathbb Q$, Hermite-Korkin-Zolotarev reduction (\cite{KZ}, \cite{Martinet} and \cite{Z}) implies that $X$ can be chosen in such a way that its entries are bounded by $\frac{1}{2}$.  In the proof of Proposition \ref{kz}
we could have shown further that $X$ can be chosen so that its entries are bounded above by the Euclidean minimum $\beta_E$.
In our derivation of the upper bound on $g_{\mathcal O}^*(n)$, we will need {\em good} asymptotic bounds on the entries of both $X$ and $X^{-1}$.
However, the entries of $X^{-1}$ could be huge even if the entries of $X$ are bounded by $\beta_E$ as described above.  For instance,
if all entries of $X$ above the diagonal are $-\beta_E$, then the $(1,n)$ entry of $X^{-1}$ is $\beta_E(1 + \beta_E)^{n-2}$, which is in the
order of an exponential of $n$.  In the next section we will show that one can choose $X$ so that the entries of both $X$ and $X^{-1}$ are at
worst in the order of an exponential of $\sqrt{n}$, and that will be a crucial step in obtaining our main result.

For the rest of this section, we will concentrate on bounding the $h_i$'s defined by \eqref{hi}.
Let\footnote{When $E$ is an imaginary quadratic field, our $\gamma_{n,E}$ is the square root of the Hermite-Humbert constant
for $E$ defined by Icaza in \cite{Icaza97}.}
$$\gamma_{n, E} := \sup_f \frac{\mu(f)}{d(f)^{\frac{1}{n}}},$$
where $f$ runs over all positive definite hermitian forms in $n$
variables over $E$.  By \cite[Theorem 1]{Icaza97}, we have
$\gamma_{n,E}\leq\sigma_{n,E}$, with
$$\sigma_{n,E} = \begin{cases}
4\, \omega_{n}^{-\frac{2}{n}} & \mbox{ if $E = \mathbb Q$};\\
2\, \omega_{2n}^{-\frac{1}{n}}\, \vert d_E\vert^{\frac{1}{2}} & \mbox{ otherwise},
\end{cases}$$
where $d_E$ is the discriminant of $E$ and
$$\omega_{n} = \pi^{\frac{n}{2}}\Gamma\left(\frac{n}{2}+1\right)^{-1} = \begin{cases}
\frac{\pi^{\frac{n}{2}}}{\left(\frac{n}{2}\right)!} & \mbox{ if $n$ is even},\\
\frac{\pi^{\frac{n-1}{2}}\, 2^{n+1}\, \left(\frac{n+1}{2}\right)!}{(n+1)!} & \mbox{ if $n$ is odd}.
\end{cases}$$
It follows from Stirling's series expansion \cite[Page 253]{ww} that
$$n! = \sqrt{2\pi}\, n^{n + \frac{1}{2}}\, e^{-n}\, e^{r_n},$$
where $0 < r_n < \frac{1}{12n}$ for all positive integers $n$ (see \cite{Robbins} for an elementary proof).  It follows that
$$\sqrt{2\pi}\, n^{n + \frac{1}{2}} \, e^{-n} \leq n! \leq e\, n^{n + \frac{1}{2}}\, e^{-n},$$
for all positive integers $n$.  A straightforward calculation using these two inequalities shows that
\begin{equation}\label{hermiteconstant}
\sigma_{n, E} \leq e^{-1 + \frac{1}{n}}\,n^{1 + \frac{1}{n}}\, \vert d_E\vert^{\frac{1}{2}}, \quad \mbox{ for all $n \geq 1$}.
\end{equation}

Since $E$ will be clear from the context, we simply write $\sigma_n$ instead of $\sigma_{n,E}$.  Then,
for any positive definite hermitian form in $n$ variables over $E$, we have
\begin{equation}\label{muf}
\mu(f) \leq \sigma_n \, d(f)^{\frac{1}{n}}.
\end{equation}
For any positive integer $m$, let
\begin{equation}\label{definealpham}
\alpha(m):=\sigma_{m+1}\prod_{k=2}^{m+1}\sigma_k^{\frac{1}{k-1}}.
\end{equation}

\begin{lem} \label{hihj}
Let $f(x_1, \ldots, x_n)$ be a weakly reduced positive definite hermitian form over $E$.  Then the coefficients $h_1, \ldots, h_n$ satisfy the inequalities
\begin{equation*}
h_ih_j^{-1} \leq \alpha(j - i)
\end{equation*}
for any $1 \leq i < j \leq n$.
\end{lem}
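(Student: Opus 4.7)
I would proceed by induction on $m = j - i$, with the key input at each step being the Hermite-type inequality \eqref{muf} applied to an appropriate subform of $f$. Fix $1 \leq i < j \leq n$ and set $m = j - i$. By the definition of a weakly reduced form, the coefficient $h_i$ is the minimum on $\mathcal{O}^{n-i+1}\setminus\{0\}$ of the ``tail'' form
$$f_i(x_i, \ldots, x_n) := \sum_{k=i}^n h_k\,\norm\Bigl(x_k + \sum_{l=k+1}^n t_{kl}x_l\Bigr).$$
Setting $x_{j+1} = \cdots = x_n = 0$ in $f_i$ produces the positive definite hermitian form
$$g(x_i, \ldots, x_j) = \sum_{k=i}^j h_k\,\norm\Bigl(x_k + \sum_{l=k+1}^j t_{kl}x_l\Bigr)$$
in $m+1$ variables, whose Gram matrix can be written as $X_1^* D X_1$ with $D = \text{diag}(h_i, \ldots, h_j)$ and $X_1$ upper-triangular unipotent; hence $d(g) = h_i h_{i+1}\cdots h_j$. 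Since every nonzero vector in $\mathcal{O}^{m+1}$ extends by zeros to a nonzero vector of $f_i$, one has $h_i \leq \mu(g)$, and applying \eqref{muf} to $g$ yields $h_i \leq \sigma_{m+1}(h_i h_{i+1}\cdots h_j)^{1/(m+1)}$. Isolating $h_i$ gives the per-step inequality
$$h_i \;\leq\; \sigma_{m+1}^{(m+1)/m}\,(h_{i+1} h_{i+2} \cdots h_j)^{1/m}.$$

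For $m = 1$ this reads $h_i \leq \sigma_2^2 h_{i+1} = \alpha(1) h_{i+1}$ by \eqref{definealpham}, handling the base case. For the inductive step ($m \geq 2$), the induction hypothesis supplies $h_k \leq \alpha(j-k) h_j$ for $i+1 \leq k \leq j-1$ (and $h_j \leq h_j$ trivially); multiplying these bounds yields $\prod_{k=i+1}^j h_k \leq h_j^m \prod_{s=1}^{m-1}\alpha(s)$, and substituting into the per-step inequality gives
$$h_i \;\leq\; h_j \cdot \sigma_{m+1}^{(m+1)/m}\Bigl(\prod_{s=1}^{m-1}\alpha(s)\Bigr)^{1/m}.$$

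The remaining task is to check that the prefactor equals $\alpha(m)$. Expanding $\alpha(s) = \sigma_{s+1}\prod_{k=2}^{s+1}\sigma_k^{1/(k-1)}$ and collecting the total exponent of each $\sigma_k$ as $s$ ranges over $1, \ldots, m-1$ yields $\prod_{s=1}^{m-1}\alpha(s) = \prod_{k=2}^m \sigma_k^{m/(k-1)}$, and a direct comparison with \eqref{definealpham} then gives $\sigma_{m+1}^{(m+1)/m}\bigl(\prod_{s=1}^{m-1}\alpha(s)\bigr)^{1/m} = \sigma_{m+1}\prod_{k=2}^{m+1}\sigma_k^{1/(k-1)} = \alpha(m)$. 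This bookkeeping identity is the main, purely computational, obstacle; once it is in place the induction closes. The only conceptual ingredients beyond it are the correct choice of the subform $g$ and the application of the Hermite bound \eqref{muf}.
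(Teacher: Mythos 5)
Your proof is correct and takes essentially the same route as the paper: the paper reduces to the case $(i,j)=(1,n)$ via the same truncation observation and then invokes the argument of Schnorr's Lemma 2.4, which is exactly the Hermite-bound-on-truncated-subforms plus induction and exponent bookkeeping that you write out explicitly. No gaps.
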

\begin{proof}
When $E = \mathbb Q$, this lemma can be deduced from \cite[Lemma 2.4]{Schnorr} which works for weakly reduced quadratic forms, not necessary only for HKZ reduced quadratic forms.

For weakly reduced hermitian forms in general, note that, since for any $1 \leq i < j \leq n$, the form
$$g(x_i, \ldots, x_j): = f(0, \ldots, 0, x_i, \ldots, x_j, 0, \ldots, 0)$$
is weakly reduced, we may assume that $i = 1$ and $j = n$.  Then, applying the same argument as in the proof of \cite[Lemma 2.4]{Schnorr}, we obtain
$$h_1 \leq \sigma_n^{\frac{n}{n-1}} \prod_{i = 1}^{n-2} \sigma_{n-i}^{\frac{1}{n-i-1}} h_n,$$
which is exactly what we need after a simple algebraic manipulation.
\end{proof}

\begin{lem}\label{alpham}
For any positive integer $m$,
$$\alpha(m) \leq D_1 \, \vert d_E \vert^{\frac{1}{2}(1 + \Sigma(m))}\, e^{\frac{1}{2}(\ln m)^2},$$
where $D_1$ is an absolute constant and $\Sigma(m) = 1 + \frac{1}{2} + \cdots + \frac{1}{m}$.
\end{lem}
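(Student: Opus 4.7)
The plan is to take logarithms and apply the explicit bound \eqref{hermiteconstant} on $\sigma_{n,E}$ term by term, keeping careful track of the $|d_E|$ factor and the remaining analytic part.

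First I would write
\[
\ln\alpha(m)=\ln\sigma_{m+1}+\sum_{k=2}^{m+1}\frac{1}{k-1}\ln\sigma_k,
\]
and substitute the inequality $\ln\sigma_{k}\leq -1+\tfrac{1}{k}+\bigl(1+\tfrac{1}{k}\bigr)\ln k+\tfrac12\ln|d_E|$ obtained from \eqref{hermiteconstant}. The $\tfrac12\ln|d_E|$ contribution from $\sigma_{m+1}$ adds $\tfrac12\ln|d_E|$, while the contribution from the sum is
\[
\frac{1}{2}\ln|d_E|\sum_{k=2}^{m+1}\frac{1}{k-1}=\frac{1}{2}\ln|d_E|\sum_{j=1}^{m}\frac{1}{j}=\frac{1}{2}\Sigma(m)\ln|d_E|.
\]
Together these give exactly the $|d_E|^{(1+\Sigma(m))/2}$ factor demanded by the statement.

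The remaining (field-independent) part is
\[
T(m):=-1+\tfrac{1}{m+1}+\bigl(1+\tfrac{1}{m+1}\bigr)\ln(m+1)+\sum_{k=2}^{m+1}\frac{1}{k-1}\Bigl[-1+\tfrac{1}{k}+\bigl(1+\tfrac{1}{k}\bigr)\ln k\Bigr],
\]
and the goal reduces to showing $T(m)\leq \ln D_1+\tfrac12(\ln m)^2$ for a universal constant $D_1$. The terms $\sum\frac{1}{k-1}\bigl(-1+\tfrac1k\bigr)$ and $\sum \frac{\ln k}{k(k-1)}$ are uniformly $O(1)$ (they are absolutely convergent series), and $(1+\tfrac{1}{m+1})\ln(m+1)$ is $O(\ln m)$, hence all absorbed into $\ln D_1$. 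The essential contribution is
\[
\sum_{k=2}^{m+1}\frac{\ln k}{k-1}=\sum_{j=1}^{m}\frac{\ln(j+1)}{j}\leq \sum_{j=1}^{m}\frac{\ln j}{j}+\sum_{j=1}^{m}\frac{1}{j^2},
\]
using $\ln(1+1/j)\leq 1/j$, and the second sum is bounded by $\pi^2/6$.

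The main estimate is then the integral comparison $\sum_{j=1}^{m}\frac{\ln j}{j}\leq \int_{1}^{m}\frac{\ln x}{x}\,dx+O(1)=\tfrac12(\ln m)^2+O(1)$, which yields $T(m)\leq \tfrac12(\ln m)^2+C$ for an absolute constant $C$; choosing $D_1=e^{C}$ finishes the argument. There is no real obstacle here — the result is a routine, if slightly tedious, asymptotic calculation. The only point requiring any care is bookkeeping the various $O(1)$ contributions so as to confirm that $D_1$ can be chosen independent of $m$ and of $E$, which is transparent once the sums above are identified as convergent.
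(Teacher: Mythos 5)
Your overall route is the same as the paper's (take logarithms, insert the bound \eqref{hermiteconstant} for each $\sigma_k$, and estimate $\sum_{k}\frac{\ln k}{k-1}$ by the integral $\int \frac{\ln x}{x}\,dx$), and your treatment of the $|d_E|$ factor and of the main sum is fine. But there is a genuine gap in the bookkeeping of the lower-order terms. First, $\sum_{k=2}^{m+1}\frac{1}{k-1}\bigl(-1+\frac{1}{k}\bigr)$ is \emph{not} an absolutely convergent series: since $-1+\frac1k=-\frac{k-1}{k}$, this sum equals $-\sum_{k=2}^{m+1}\frac{1}{k}=-(\Sigma(m+1)-1)\sim-\ln m$. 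Second, and this is the real problem, you declare that $(1+\frac{1}{m+1})\ln(m+1)=O(\ln m)$ can be ``absorbed into $\ln D_1$''; an unbounded term of size $\ln m$ cannot be hidden in an absolute constant, nor can it be swallowed by $\tfrac12(\ln m)^2$ at the cost of only a constant (that would require replacing $\tfrac12(\ln m)^2$ by $\tfrac12(\ln m+1)^2$). As written, your estimate only yields $\alpha(m)\leq D_1\, m\, |d_E|^{\frac{1}{2}(1+\Sigma(m))}e^{\frac{1}{2}(\ln m)^2}$, which is strictly weaker than the statement.

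The repair is exactly the cancellation you discarded: keep the negative harmonic contribution $-\sum_{k=2}^{m+1}\frac1k$ (equivalently, the factor $e^{-(1+\Sigma(m))}$ coming from the $e^{-1+\frac1k}$ terms) and set it against the $+\ln(m+1)$ coming from the leading factor $\sigma_{m+1}$. This is precisely how the paper concludes: after collecting terms one has $\alpha(m)\leq D_1\bigl(e^{-1}|d_E|^{1/2}\bigr)^{1+\Sigma(m)}m^{1+\frac12\ln m}$, and then $e^{-(1+\Sigma(m))}<e^{-\ln m}=m^{-1}$ kills the stray factor of $m$, giving the bound as stated. With that one observation added, your argument is correct and coincides with the paper's proof.
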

\begin{proof}
When $E = \mathbb Q$, this upper bound for $\alpha(m)$ can be found in \cite[Corollary 2.5]{Schnorr}.  In general, by virtue of \eqref{hermiteconstant}, we have
$$\alpha(m) \leq \left(e^{-1}|d_E|^{\frac{1}{2}}\right)^{1+\Sigma(m)}\,e^{\frac{1}{m+1}}(m+1)^{1+\frac{1}{m+1}}
            \prod_{k=2}^{m+1}(e^{\frac{1}{k}}k^{1+\frac{1}{k}})^{\frac{1}{k-1}}.$$
Now, let us consider
\begin{equation*}
\begin{split}
& \ln\left(e^{\frac{1}{m+1}}\,(m+1)^{1+\frac{1}{m+1}}\prod_{k=2}^{m+1}(e^{\frac{1}{k}}k^{1+\frac{1}{k}})^{\frac{1}{k-1}}\right) \\
 = & \left(1+\frac{1}{m+1}\right)\ln(m+1)+\sum_{k=2}^{m+1}\frac{1}{k-1}\ln k+\sum_{k=2}^{m+1}\frac{1}{k(k-1)}\ln k + \ln e.
\end{split}
\end{equation*}
Since $\ln(1 + m) = \ln m + O(\frac{1}{m})$, the first term becomes
$$\left(1+\frac{1}{m+1}\right)\ln(m+1)=\left(1+\frac{1}{m+1}\right)\left(\ln m+O\left(\frac{1}{m}\right)\right)\leq\ln m+ C_1,$$
where $C_1$ is an absolute constant.  For the second term, note that
$$\sum_{k=2}^{m+1}\frac{1}{k-1}\ln k=\sum_{k=1}^m\frac{\ln k}{k}+O(1)\leq\int_1^m\frac{\ln k}{k}dk+O(1)\leq \frac{(\ln m)^2}{2}+C_2,$$
where $C_2$ is another absolute constant.  At last, there is yet another absolute constant $C_3$ such that
$$\sum_{k=2}^{m+1}\frac{1}{k(k-1)}\ln k \leq \sum_{k=1}^{m}\frac{\ln k}{k^2}+O(1)\leq C_3.$$
Therefore,
$$\alpha(m) \leq D_1 \left(e^{-1}\vert d_E\vert^{\frac{1}{2}} \right)^{1 + \Sigma(m)}\, m^{1 + \frac{1}{2}\ln m},$$
where $D_1 = e^{C_1 + C_2 + C_3 + 1}$.  Furthermore, since
$$e^{-(1 + \Sigma(m))} < e^{-\ln m} = m^{-1},$$
we obtain
$$\alpha(m) \leq D_1 \, \vert d_E\vert^{\frac{1}{2}(1 + \Sigma(m))}\, e^{\frac{1}{2}(\ln m)^2}.$$
\end{proof}


\section{Balanced HKZ reduction}

We begin this section with the general assumption that $E$ is either $\mathbb Q$ or an imaginary quadratic field, and that $\mathcal O$
is the ring of integers in $E$.  Recall that the completion of $E$ with respect to its unique archimedean prime spot is denoted by $\mathbb K$.
For the sake of convenience, in the subsequent discussion we will drop the subscript and use $\mathcal C$ and $\beta$ to denote the set $\mathcal C_E$ and the constant $\beta_E$, respectively, introduced in Section 1.

Let $n$ be a positive integer.  For any integers $i, j$ between 1 and $n$, let $E_{ij}$ be the matrix with 1 in the $(i,j)$ position and
0 elsewhere.  The set $\{E_{ij}: 1\leq i, j \leq n\}$ is a basis of $M_n(\mathbb K)$. Let $T(n)$ be the set of upper triangular matrices
in $M_n(\mathbb K)$ and $U(n)$ be the group of unipotent matrices in $T(n)$.  When the integer $n$ is clear from the context of discussion,
we will simply use $T$ and $U$ to denote $T(n)$ and $U(n)$, respectively.   For any nonnegative integer $k < n$, let $T_k$ be the subspace
of $M_n(\mathbb K)$ spanned by the matrices $E_{1,1 + k}, \ldots, E_{n-k, n}$.  Note that $T_0$ is the subspace containing all the
diagonal matrices in $M_n(\mathbb K)$, and for $k > 0$, $T_k$ is made up of matrices that have 0 everywhere outside of one of the
sub-diagonals which stays above the main diagonal.   For any integer $k \geq n$, we set $T_k$ to be zero.  Then
$$T = \bigoplus_{k \geq 0} T_k, \quad \mbox{ and } \quad T_k T_\ell \subseteq T_{k + \ell}.$$
Thus, $T$ can be viewed as a graded ring over the natural numbers.  For any nonnegative integer $k$, let
$$A_k := \bigoplus_{i\geq k} T_i$$
which is a two-sided ideal of $T$.  It is straightforward to check that $A_kA_\ell \subseteq A_{k + \ell}$ and $A_k = T_k + A_{k+1}$.
For any $X, Y\in T$ such that $X\equiv Y \mod  A_k$, there is a unique $Z\in T_k$ such that $X\equiv Y+Z \mod  A_{k+1}$.  Also of note is
that $U$ is the subset of $T$ in which every matrix is congruent to $I$ mod $A_1$.

\begin{lem}\label{xy}
For any $X\in U$, there exists  $Y\in U$ with entries in $\mathcal O$ such that $XY$ can be written as $XY=\exp(Z_1)\cdots \exp(Z_{n-1})$
where for $1 \leq k \leq n-1$, $Z_k\in T_k$ and the entries of $Z_k$ are in $\mathcal{C}$.
\end{lem}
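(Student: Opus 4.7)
The plan is to exploit the graded structure $T = \bigoplus_k T_k$ of upper triangular matrices and construct $Y$ filtration-level by filtration-level. The first observation to record is that for $Z \in T_k$ with $k \geq 1$, the matrix $Z$ is strictly upper triangular, hence nilpotent, so $\exp(Z) = I + Z + \tfrac{1}{2}Z^2 + \cdots$ is a finite sum and lies in $U$. Moreover $Z^j \in T_{jk}$, so $\exp(Z) \equiv I + Z \pmod{A_{2k}}$, and in particular $\exp(Z) \equiv I + Z \pmod{A_{k+1}}$.

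I would then proceed by induction on $k = 0, 1, \ldots, n-1$, establishing the following statement: there exist $Y^{(k)} \in U$ with entries in $\mathcal O$ and $Z_1 \in T_1, \ldots, Z_k \in T_k$ with entries in $\mathcal C$ satisfying
$$XY^{(k)} \equiv \exp(Z_1)\exp(Z_2)\cdots\exp(Z_k) \pmod{A_{k+1}}.$$
The base case $k = 0$ is just $Y^{(0)} = I$, since $X \equiv I \pmod{A_1}$ (interpreting the empty product on the right as $I$). Once the induction has reached $k = n-1$, the fact that $T_j = 0$ for $j \geq n$ forces $A_n = 0$, so the congruence becomes an equality $XY^{(n-1)} = \exp(Z_1) \cdots \exp(Z_{n-1})$; the sought-for matrix is $Y := Y^{(n-1)}$, which is a product of integer-entry unipotent upper triangular matrices and therefore lies in $U$ with entries in $\mathcal O$.

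For the inductive step, assume the statement for $k$. The difference $XY^{(k)} - \exp(Z_1)\cdots\exp(Z_k)$ lies in $A_{k+1} = T_{k+1} \oplus A_{k+2}$, so we can write
$$XY^{(k)} \equiv \exp(Z_1)\cdots\exp(Z_k) + R_{k+1} \pmod{A_{k+2}}$$
for a uniquely determined $R_{k+1} \in T_{k+1}$. I would then seek $Y^{(k+1)}$ in the form $Y^{(k)}(I + W)$ with $W \in T_{k+1}$ having entries in $\mathcal O$. Using $\exp(Z_1)\cdots\exp(Z_k) \equiv I \pmod{A_1}$, the relation $A_1 \cdot A_{k+1} \subseteq A_{k+2}$, and the identity $\exp(Z_{k+1}) \equiv I + Z_{k+1} \pmod{A_{k+2}}$ (together with the fact that every cross-term involving $Z_{k+1}$ and an earlier factor lies in $A_{k+2}$), a routine expansion reduces the requirement
$$XY^{(k+1)} \equiv \exp(Z_1) \cdots \exp(Z_{k+1}) \pmod{A_{k+2}}$$
to the single entrywise equation $Z_{k+1} = R_{k+1} + W$. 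For each entry of $-R_{k+1}$, the decomposition $\mathbb K = \mathcal O + \mathcal C$ noted in Section 1 (derived from the definition of $\beta_E$ and the denseness of $E$ in $\mathbb K$) supplies the required splitting into an $\mathcal O$-entry of $W$ and a $\mathcal C$-entry of $Z_{k+1}$, closing the induction.

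The main technical hurdle will be correctly accounting for all cross-contributions when the exponentials are multiplied. The crux is the combinatorial observation that every monomial $Z_{i_1}^{m_1} \cdots Z_{i_r}^{m_r}$ appearing in the expansion of $\exp(Z_1)\cdots\exp(Z_{k+1})$ lies in $T_{m_1 i_1 + \cdots + m_r i_r}$, so only the single term $Z_{k+1}$ contributes a genuinely new element at level $T_{k+1}$ modulo $A_{k+2}$. Once this bookkeeping is in hand, no analytic or convergence issues arise because the entire computation takes place in a finite-dimensional nilpotent algebra where the exponentials are polynomials.
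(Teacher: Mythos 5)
Your proof is correct and follows essentially the same filtration-level induction as the paper: the same congruence $XY^{(k)}\equiv\exp(Z_1)\cdots\exp(Z_k)\pmod{A_{k+1}}$, the same use of $\exp(Z_{k+1})\equiv I+Z_{k+1}\pmod{A_{k+2}}$ via nilpotency, and the same appeal to the decomposition $\mathbb K=\mathcal O+\mathcal C$ to absorb the level-$(k+1)$ discrepancy. The only (immaterial) difference is that you accumulate the integral correction multiplicatively as $Y=(I+W_1)\cdots(I+W_{n-1})$, whereas the paper takes $Y=I+Y_1+\cdots+Y_{n-1}$ additively.
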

\begin{proof}
For the sake of convenience, we set $Y_0$ and $Z_0$ to be the zero matrix in $U$. We will show by induction that for any $0 \leq k \leq n-1$, there are matrices $Y_0, Z_0, Y_1, Z_1,\ldots, Y_k,Z_k$ which satisfy the conditions:
\begin{enumerate}
\item[(a)] $Y_i, Z_i\in T_i$ for $0 \leq i \leq k$;

\item[(b)] $Y_i$ has entries in $\mathcal{O}$ and $Z_i$ has entries in $\mathcal{C}$ for any $0\leq i\leq k$;

\item[(c)] $X(I+Y_1+\cdots+Y_k)\equiv \exp(Z_1)\cdots \exp(Z_k) \mod  A_{k+1}$.
\end{enumerate}
Since $A_{n} = 0$, the matrix $Y: = I + Y_1 + \cdots + Y_{n-1}$ is what we need.

The base case $k = 0$ requires no proof; it states exactly the fact that $X\equiv I \mod  A_1$.  Suppose that we have constructed
$Y_0, Z_0, Y_1, Z_1, \ldots ,Y_{k-1}, Z_{k-1}$ which satisfy all the above conditions.  Let
$$A=X(I+Y_1+\cdots+Y_{k-1}) \quad \mbox{ and } \quad B=\exp(Z_1)\cdots \exp(Z_{k-1}).$$
It is clear that $A,B\in U$ and $A\equiv B \mod  A_k$.  Hence there is some $X_k\in T_k$ such that $A\equiv B+X_k \mod  A_{k+1}$.
By the remark made in the Introduction, there exists  $Y_k\in T_k$ with entries in $\mathcal O$ such that the entries of $Z_k:= X_k+Y_k$ are in $\mathcal{C}$. In addition,
$$A+Y_k\equiv B+X_k+Y_k\equiv B+Z_k \mod  A_{k+1}.$$
It is clear that $Y_k$ and $Z_k$ satisfy conditions (a) and (b).  It follows from $Y_k\in A_k$ and $X\equiv I\mod  A_1$
that $XY_k\equiv Y_k \mod A_{k+1}$. Similarly  $BZ_k\equiv Z_k \mod  A_{k+1}$. Thus
$$A+Y_k\equiv A+XY_k\equiv X(I+Y_1+\cdots+Y_k)\mod  A_{k+1}$$
and
$$B+Z_k\equiv B+BZ_k\equiv B(I+Z_k)\mod  A_{k+1}.$$
Note that $\exp(Z_k)=I+Z_k+Z_k^2/2!+\cdots\equiv I+Z_k\mod  A_{k+1}$, since  $Z_k^l\in A_{kl}\subseteq A_{k+1}$ for $l>1$.
Thus $B+Z_k\equiv B\exp(Z_k)\mod A_{k+1}$. It follows that
$$X(I+Y_1+\cdots+Y_k)\equiv A+Y_k\equiv B+Z_k\equiv \exp(Z_1)\cdots \exp(Z_k)\mod A_{k+1},$$
which is condition (c).  The induction is now complete.
\end{proof}

For any nonnegative integer $m$, let $c(m)$ be the coefficient of $x^m$ in the Maclaurin series of $\exp(\frac{\beta x}{1 - x})$.
For two matrices $A = (a_{ij}), B = (b_{ij}) \in M_n(\mathbb K)$, we write ``$A\preceq B$" if $\vert a_{ij} \vert \leq \vert b_{ij}\vert$
for all $1\leq i, j \leq n$.

\begin{lem}\label{dominant}
Let $A, B, C, D$ be matrices in $M_n(\mathbb K)$.  If $A \preceq B$, $C \preceq D$, and the entries of $B$ and $D$ are nonnegative real numbers,
then $AC \preceq BD$ and $A + C \preceq B + D$.
\end{lem}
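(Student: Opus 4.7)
The plan is to verify the two statements entrywise using the triangle inequality together with the hypothesis that $B$ and $D$ have nonnegative real entries. Write $A = (a_{ij})$, $B = (b_{ij})$, $C = (c_{ij})$, $D = (d_{ij})$. The nonnegativity hypothesis on $B$ and $D$ plays a double role: it ensures $b_{ij} = |b_{ij}|$ and $d_{ij} = |d_{ij}|$, and it guarantees that all entries of $B + D$ and $BD$ are again nonnegative reals, so the $|\cdot|$ wrapping $(B+D)_{ij}$ and $(BD)_{ij}$ in the conclusion can be dropped without loss.

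For the sum, I would compute
\[
|(A+C)_{ij}| = |a_{ij} + c_{ij}| \leq |a_{ij}| + |c_{ij}| \leq b_{ij} + d_{ij} = (B+D)_{ij},
\]
where the first inequality is the triangle inequality on $\mathbb{K}$ and the second uses $A \preceq B$ and $C \preceq D$.

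For the product, I would write
\[
|(AC)_{ij}| = \Bigl| \sum_{k=1}^{n} a_{ik} c_{kj} \Bigr| \leq \sum_{k=1}^{n} |a_{ik}|\,|c_{kj}| \leq \sum_{k=1}^{n} b_{ik} d_{kj} = (BD)_{ij},
\]
again by the triangle inequality and by applying $A \preceq B$ and $C \preceq D$ termwise, noting that the multiplicativity $|a_{ik} c_{kj}| = |a_{ik}|\,|c_{kj}|$ holds for the archimedean absolute value on $\mathbb{K}$.

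There is no real obstacle here; the statement is a standard entrywise domination fact, and the only subtlety is invoking nonnegativity of $B$ and $D$ so that the right-hand sides in both inequalities coincide with $|(B+D)_{ij}|$ and $|(BD)_{ij}|$ respectively, yielding $A + C \preceq B + D$ and $AC \preceq BD$.
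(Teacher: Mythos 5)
Your proof is correct and is exactly the entrywise argument the paper has in mind; the paper simply dismisses the lemma with ``This is clear,'' and your write-up supplies the routine details (triangle inequality, multiplicativity of the absolute value, and the nonnegativity of the entries of $B$, $D$, $B+D$, and $BD$) without any deviation in approach.
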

\begin{proof}
This is clear.
\end{proof}

\begin{lem}\label{xy2}
For any $X\in U$, there exists $Y\in U$ with entries in $\mathcal O$ such that for $1 \leq i < j \leq n$,  the absolute values of the $(i,j)$
entries of both $XY$ and $(XY)^{-1}$ are less than or equal to $c(j-i)$.
\end{lem}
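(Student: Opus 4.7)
The plan is to reduce the problem to Lemma \ref{xy} and then dominate every $\exp(Z_k)$ entrywise by an exponential involving the single nilpotent shift matrix $N \in M_n(\mathbb{K})$ whose $(i,j)$ entry is $1$ when $j = i+1$ and $0$ otherwise. The point is that $N^k$ has $1$'s on the $k$-th superdiagonal and $0$'s elsewhere, so any $Z_k \in T_k$ with entries in $\mathcal{C}$ satisfies $Z_k \preceq \beta N^k$. Since the matrices $\beta N^k$ all commute, the product of their exponentials will collapse into a single exponential whose entries can be read off from the Maclaurin series defining the constants $c(m)$.

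More concretely, I would first apply Lemma \ref{xy} to obtain $Y \in U$ with entries in $\mathcal{O}$ and a factorization $XY = \exp(Z_1) \cdots \exp(Z_{n-1})$ with each $Z_k \in T_k$ having entries in $\mathcal{C}$. Iterating Lemma \ref{dominant} (using nilpotency of $N$ to reduce every infinite series to a finite sum, so that dominance is preserved under taking limits) yields $Z_k^l \preceq \beta^l N^{kl}$ for all $l \geq 0$, and hence
$$\exp(Z_k) \;\preceq\; \sum_{l \geq 0} \frac{\beta^l N^{kl}}{l!} \;=\; \exp(\beta N^k).$$
Applying Lemma \ref{dominant} once more to the product of exponentials, and using that the $\beta N^k$ pairwise commute, gives
$$XY \;\preceq\; \prod_{k=1}^{n-1} \exp(\beta N^k) \;=\; \exp\!\left(\beta \sum_{k=1}^{n-1} N^k\right) \;=\; \exp\!\bigl(\beta N (I-N)^{-1}\bigr),$$
where in the last step I use that $\sum_{k \geq 1} N^k = N(I-N)^{-1}$ as a finite sum. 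Substituting the nilpotent $N$ into the Maclaurin expansion $\exp(\beta x/(1-x)) = \sum_m c(m) x^m$ then gives $\exp(\beta N(I-N)^{-1}) = \sum_m c(m) N^m$, whose $(i,j)$ entry is exactly $c(j-i)$ for $j \geq i$. This establishes $\vert (XY)_{ij} \vert \leq c(j-i)$ for $1 \leq i < j \leq n$.

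For $(XY)^{-1}$, the same $Y$ works: from $(\exp(Z_k))^{-1} = \exp(-Z_k)$ we get $(XY)^{-1} = \exp(-Z_{n-1}) \cdots \exp(-Z_1)$, and since $\mathcal{C}$ is symmetric about the origin, every $-Z_k$ again has entries in $\mathcal{C}$. The argument of the previous paragraph applies verbatim (commutativity of the dominators $\exp(\beta N^k)$ makes the reversal of the product order harmless) and yields the same bound. No real obstacle arises in this scheme; the only delicate point is justifying that dominance is preserved by the infinite series defining $\exp(\cdot)$, which is immediate once one observes that every series in sight truncates after finitely many nonzero terms because $N$ is nilpotent. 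The cleverness is entirely concentrated in recognizing that the product structure $\prod_k \exp(\beta N^k)$ produces precisely the generating function $\exp(\beta x/(1-x))$ used to define $c(m)$.
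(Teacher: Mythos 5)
Your proposal is correct and follows essentially the same route as the paper's own proof: the paper also dominates each $Z_k$ by $\beta D^k$ (your $N$ is the paper's shift matrix $D$), collapses the product of exponentials into $\exp(\beta D(I-D)^{-1})=\sum_m c(m)D^m$, and handles $(XY)^{-1}$ via $\exp(-Z_{n-1})\cdots\exp(-Z_1)$ with $-Z_k\preceq\beta D^k$. Nothing further is needed.
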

\begin{proof}
Let $X$ be a matrix in $U$, and $Y, Z_1, \ldots, Z_{n-1}$ be the matrices obtained from Lemma \ref{xy}.  Let $D=\sum_{i=1}^{n-1}E_{i,i+1}$.
A simple induction argument shows that for $1 \leq k \leq n-1$, $D^k=\sum_{i=1}^{n-k}E_{i,i+k}$ which is in $T_k$; in particular $D^n=0$.
Since $Z_k$ is also in $T_k$ and the entries of $Z_k$ are in $\mathcal{C}$, $Z_k\preceq \beta D^k.$  By Lemma \ref{dominant},
$Z_k^\ell \preceq \beta^\ell D^{k\ell}$ for every $\ell \geq 0$ and
hence $I+Z_k+\frac 12Z_k^2+\cdots\preceq I+\beta D^k+\frac
12\beta^2D^{2k}+\cdots$, i.e. $\exp(Z_k)\preceq\exp(\beta D^k)$.  Moreover,
$\exp(Z_1)\cdots \exp(Z_{n-1}) \preceq \exp(\beta D)\cdots \exp(\beta D^{n-1})$ and thus
$$XY \preceq \exp(\beta D)\cdots \exp(\beta D^{n-1}) = \exp(\beta(D+\cdots+D^{n-1}))=\exp\left(\beta D(I-D)^{-1}\right).$$
Here $D(I-D)^{-1}=D+D^2+\cdots+D^{n-1}$, since $D^k=0$ when $k\geq n$. Note that
$$\exp\left(\beta D(I-D)^{-1}\right)=\sum_{m=0}^\infty c(m) D^m=\sum_{m=0}^{n-1}c(m)D^m=\sum_{m=0}^{n-1}c(m)\sum_{i=1}^{n-m}E_{i,i+m}.$$
Therefore,  the absolute value of the $(i,i+m)$ entry of $XY$ is less than or equal to $c(m)$.  Equivalently,  the absolute value of
the $(i,j)$ entry of $XY$ is less than or equal to $c(j-i)$.

As for $(XY)^{-1}$, notice that $(XY)^{-1}=\exp(-Z_{n-1})\cdots\exp(-Z_1)$, and that $-Z_k\preceq \beta D^k$. The proof follows immediately.
\end{proof}

We will be interested in an explicit upper bound for $c(m)$, which is given in the next lemma.

\begin{lem}\label{cm}
There exists a constant $D_2$, depending only on $E$, such that
$$c(m)\leq D_2\,e^{2\sqrt{\beta m}}$$
for any $m \geq 1$.
\end{lem}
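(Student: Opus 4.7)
The plan is to exploit the fact that $c(m) \ge 0$ for all $m$, so that $\exp(\beta x/(1-x)) = \sum_{k \ge 0} c(k) x^k$ gives, for every $r \in (0,1)$,
$$c(m) \, r^m \;\le\; \sum_{k \ge 0} c(k) r^k \;=\; \exp\!\left(\frac{\beta r}{1-r}\right),$$
and hence $c(m) \le r^{-m} \exp(\beta r/(1-r))$. We then optimize the choice of $r$. This is essentially the saddle-point heuristic applied to the Cauchy integral for $c(m)$, simplified by positivity of the coefficients.

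Setting $F(r) = -m\ln r + \beta r/(1-r)$, differentiation gives $F'(r) = 0$ precisely when $(1-r)^2 = \beta r/m$. For large $m$ the optimal $r$ is close to $1$, so a convenient (slightly suboptimal but explicit) choice is $r_m := 1 - \sqrt{\beta/m}$ whenever $m > \beta$. With this choice, $\beta r_m / (1 - r_m) = \sqrt{\beta m} - \beta$, and Taylor expansion of $-\ln(1-t)$ at $t = \sqrt{\beta/m}$ yields
$$-m\ln r_m \;=\; m\!\left(\sqrt{\beta/m} + \tfrac{1}{2}\,\beta/m + \tfrac{1}{3}(\beta/m)^{3/2} + \cdots\right) \;=\; \sqrt{\beta m} + \tfrac{\beta}{2} + O(m^{-1/2}),$$
where the tail series is bounded uniformly in $m$ because $\sqrt{\beta/m} \le \frac{1}{2}$, say, once $m \ge 4\beta$. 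Adding the two contributions,
$$F(r_m) \;\le\; 2\sqrt{\beta m} \,+\, C,$$
for an absolute constant $C$, which yields the stated bound $c(m) \le D_2 \, e^{2\sqrt{\beta m}}$ for all sufficiently large $m$.

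The only mild obstacle is dealing with the small values of $m$ for which $r_m$ is not in $(0,1)$ (namely $m \le \beta$ or $m$ comparable to $\beta$). For each such $m$ the value $c(m)$ is a fixed positive real number, and there are only finitely many of them (the count depending only on $\beta = \beta_E$, hence only on $E$), so one simply enlarges the constant $D_2$ to absorb all of them. This gives the lemma with an absolute constant $D_2$ depending only on $E$, as required.
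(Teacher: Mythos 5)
Your argument is correct, but it is genuinely different from what the paper does: the paper gives no proof at all, instead citing Perron's classical analysis (see also Knopp) of the coefficients of functions of the type $\exp\bigl(\beta x/(1-x)\bigr)$, which yields the precise asymptotic $c(m)\sim \mathrm{const}\cdot m^{-3/4}e^{2\sqrt{\beta m}}$ and in particular the stated upper bound. You replace the citation by a self-contained elementary argument: since $\beta x/(1-x)=\beta\sum_{k\ge 1}x^k$ has nonnegative coefficients, so does its exponential, so for $0<r<1$ one has $c(m)r^m\le \exp\bigl(\beta r/(1-r)\bigr)$, and the quasi-optimal choice $r=1-\sqrt{\beta/m}$ gives $\beta r/(1-r)=\sqrt{\beta m}-\beta$ while $-m\ln r\le \sqrt{\beta m}+O_\beta(1)$ once $m\ge 4\beta$ (your tail estimate via $\sqrt{\beta/m}\le \tfrac12$ is fine), so $c(m)\le e^{C}e^{2\sqrt{\beta m}}$ for large $m$, and the finitely many remaining $m$ are absorbed into $D_2$. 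The only cosmetic slip is calling the constant $C$ ``absolute'': it depends on $\beta$, hence on $E$, but that is exactly what the lemma allows and your closing sentence says so. The trade-off is clear: your Chernoff-type coefficient bound is short, explicit, and needs nothing beyond positivity of the Maclaurin coefficients, whereas the cited result gives the sharper asymptotic (including the $m^{-3/4}$ factor) that the paper never actually needs.
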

\begin{proof}
This follows from \cite{perron}; see also \cite[Page 547]{knopp}.
\end{proof}

Now, let $E$ be $\mathbb Q$ or an imaginary quadratic field with class number 1.  Let $f(x_1, \ldots, x_n)$ be a positive definite hermitian form over $E$.
By Proposition \ref{kz}, we may assume that $f(x_1, \ldots, x_n)$ is already weakly reduced, and that its associated Gram matrix is of the form $X^*HX$,
where $X \in U(n)$ and $H = \text{diag}(h_1, \ldots, h_n)$.   For any $1 \leq i < j \leq n$, there exists a function $\alpha$ defined
by \eqref{definealpham} such that $h_ih_j^{-1} \leq \alpha(j - i)$.  By Lemma \ref{alpham},
\begin{equation}\label{alphaj-i}
h_ih_j^{-1} \leq  \alpha(j - i) \leq \od (n):=D_1 \,
\vert d_E \vert^{\frac{1}{2}(1 + \Sigma(n))}\, e^{\frac{1}{2}(\ln (n))^2} ,
\end{equation}
where $D_1$ is an absolute constant and $\Sigma(n) = 1 + \frac{1}{2} + \cdots + \frac{1}{n}$.

By Lemma \ref{xy2}, we may further assume that the absolute values of the $(i, j)$ entries of {\em both} $X$ and $X^{-1}$ are bounded above by $c(j - i)$,
the coefficient of $x^{j-i}$ in the Maclaurin series of $e^{\frac{\beta x}{1 - x}}$, and by Lemma \ref{cm}
\begin{equation}\label{cj-i}
c(j - i) \leq D_2\, e^{2\sqrt{\beta (j-i)}} =: \overline{c}(j - i)
\end{equation}
where $D_2$ is constant depending only on $E$.  As a result, the absolute values of the entries of both $X$ and $X^{-1}$ are at worst in the order
of an exponential of $\sqrt{n}$.   We will say that $f(x_1, \ldots, x_n)$ is {\em balanced HKZ reduced} if it is of the form we just described.
Note that the function $\overline{c}$ is an increasing function of the natural numbers.

\section{Neighbors of hermitian lattices} \label{neighbors}

In this short section we will recall some results in the theory of neighbors of integral hermitian forms due mainly to Schiemann \cite{Sch}
which is a generalization of the theory of neighbors of integral quadratic forms developed by Kneser \cite{Kn}.  Let $E$ be
an arbitrary imaginary quadratic field.    We will adopt the geometric language of hermitian spaces and lattices in this section.
Unexplained notations and terminologies will generally be those of O'Meara \cite{OM} and Schiemann \cite{Sch}.  The readers are also referred
to \cite{Gerstein1}, \cite{Gerstein2} and \cite{Johnson66} for the local theory of hermitian lattices, and \cite{Shimura} for the global theory.
A lattice shall always mean a finitely generated $\mathcal O$-module on a positive definite hermitian space over $E$.
Whenever it is clear from the context of discussion, we will simply use $h$ to denote the hermitian map on a hermitian space.
The scale of a lattice $L$ is the fractional ideal $\mathfrak s(L)$ generated by $h(\bv, \bw)$ for all $\bv, \bw \in L$.

Given two lattices $N$ and $L$, we say that $N$ is represented by $L$ if there exists an isometry sending $N$ into $L$.
For any integer $r > 0$, let $I_r$ denote the free lattice which has an orthonormal basis.  We will be particularly interested
in lattices that can be represented by some $I_r$.  If a free lattice $N$ is one of these lattices and $\{\bv_1, \ldots, \bv_n\}$
is a basis for $N$, then the hermitian form $\sum_{1\leq i, j \leq n} h(\bv_i, \bv_j)x_ix_j^*$ is a sum of the norms of $r$ linear forms over $\mathcal O$.

Let $L$ be an integral lattice (i.e. $\mathfrak s(L) \subseteq \mathcal O$) on a hermitian space $V$ over $E$ and $\p$ be a nonzero
prime ideal of $\mathcal O$ which does not divide the volume of $L$.  An integral lattice $M$ on $V$ is called a $\p$-neighbor of
$L$ if $M/(L\cap M) \cong \mathcal O/\p$ and $L/(L \cap M) \cong \mathcal O/\p^*$.  It is clear from the definition that
$\p M \subseteq L$ if $M$ is a $\p$-neighbor of $L$.  Let $\norm(L, \p)$ be the set of lattices $M$ on $V$ such that there exist
isometry $\phi$ of $V$ and a sequence of lattices $L_0 = L, L_1, \ldots, L_k = \phi(M)$ such that  $L_{i+1}$ is a $\p$-neighbor
of $L_i$ for $i = 0, \ldots, k-1$.   From \cite[Corollary 2.7]{Sch}, we know that if the rank of $L$ is at least 3, then the
special genus of $L$ (see \cite[Definition 1.7]{Sch}), $\gen^0(L)$, is contained in $\norm(L, \p)$.  It then follows
from \cite[Lemma 2.8]{Sch} that when $m\geq 3$ is odd,
$$\gen^0(I_m) \subseteq \norm(I_m, \p) \subseteq \gen(I_m).$$
This implies that when $m \geq 3$ is odd, any $M \in \gen^0(L)$ must have an isometric copy which is at most $\mathfrak h_m$
steps away from $I_m$ in $\norm(I_m, \p)$.  Here $\mathfrak h_m$ is the class number of $I_m$.  As a result, $\p^{\mathfrak h_m}M$
must be represented by $I_m$.

Let $\sigma$ be the positive integer defined by
\begin{equation}\label{h3}
\sigma = \begin{cases}
2\mathfrak h_3 & \mbox{ if 2 is inert in $E$};\\
\mathfrak h_3 & \mbox{ otherwise}.
\end{cases}
\end{equation}
An explicit formula for $\mathfrak h_3$ can be found in \cite{Hashimoto}.

\begin{lem}\label{rank2}
Let $N$ be a lattice of rank $2$.  If $\mathfrak s(N) \subseteq 2^\sigma\mathcal O$ where $\sigma$ is defined as in \eqref{h3}, then $N$
is represented by $I_3$.
\end{lem}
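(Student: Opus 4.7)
The plan is to reduce Lemma \ref{rank2} to the neighbor-theoretic fact recalled just before its statement: if $M \in \gen^0(I_3)$ has rank $3$, then $\p^{\mathfrak h_3} M$ is represented by $I_3$. Here I would take $\p$ to be a prime of $\mathcal O$ above $2$, which is a legitimate choice since $I_3$ is unimodular.

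First, I would undo the scaling by $\p^{\mathfrak h_3}$. Set
$$\tilde N := \{\bv \in EN : \p^{\mathfrak h_3}\bv \subseteq N\},$$
so that $\p^{\mathfrak h_3}\tilde N = N$ and $\mathfrak s(\tilde N) = (\p\p^*)^{-\mathfrak h_3}\,\mathfrak s(N)$. A short case analysis following \eqref{h3} shows that $(\p\p^*)^{\mathfrak h_3} = 2^{\sigma}\mathcal O$ in each of the three cases (inert, where $\p = 2\mathcal O$ and $\sigma = 2\mathfrak h_3$; split, where $\p\p^* = 2\mathcal O$ and $\sigma = \mathfrak h_3$; and ramified, where $\p = \p^*$, $\p^2 = 2\mathcal O$ and $\sigma = \mathfrak h_3$). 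Combined with the hypothesis $\mathfrak s(N)\subseteq 2^{\sigma}\mathcal O$, this gives $\mathfrak s(\tilde N)\subseteq\mathcal O$, so $\tilde N$ is a rank-$2$ integral hermitian $\mathcal O$-lattice.

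Next, I would produce a rank-$3$ lattice $M \in \gen^0(I_3)$ with $\tilde N \subseteq M$. Once such an $M$ is in hand, the discussion preceding the lemma supplies an isometric embedding of $\p^{\mathfrak h_3}M$ into $I_3$, and restricting it to $N \cong \p^{\mathfrak h_3}\tilde N \subseteq \p^{\mathfrak h_3}M$ yields the desired representation of $N$ by $I_3$.

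The main obstacle is the construction of $M$, which I would attempt by a local-global argument. At each prime $\q$ of $\mathcal O$ one must embed $\tilde N_\q$ into a rank-$3$ hermitian $\mathcal O_\q$-lattice isometric to $(I_3)_\q$. For non-dyadic $\q$ this is routine from the standard diagonalization of local hermitian lattices. For dyadic $\q$ one invokes the finer classification of local hermitian lattices (cf.\ \cite{Johnson66}, \cite{Gerstein1}, \cite{Gerstein2}); the point of the hypothesis $\mathfrak s(N)\subseteq 2^{\sigma}\mathcal O$ is precisely that it gives $\tilde N$ just enough slack at primes above $2$ to accommodate the local embedding into the unimodular $(I_3)_\q$. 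The local embeddings are then glued into a global $M \in \gen(I_3)$ by strong approximation for the unitary group, and a further adjustment at a suitable prime places $M$ into the special genus $\gen^0(I_3)$, completing the proof.
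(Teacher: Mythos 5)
Your proposal follows essentially the same route as the paper's proof: the paper likewise rescales $N$ by a negative power of a prime $\p$ above $2$ (so that the hypothesis $\mathfrak s(N)\subseteq 2^\sigma\mathcal O$ makes the rescaled rank-$2$ lattice integral), invokes the local theory of hermitian lattices to represent that integral lattice by some $K\in\gen^0(I_3)$, and then scales back using the neighbor-walk fact that $\p^{\mathfrak h_3}K$ is represented by $I_3$. Your write-up merely makes explicit the case analysis showing $(\p\p^*)^{\mathfrak h_3}=2^\sigma\mathcal O$ and the local--global gluing that the paper delegates to the cited references; the only caveat is that after rescaling the lattice is just integral, so the dyadic local embedding into $(I_3)_{\mathfrak q}$ must hold for arbitrary integral rank-$2$ lattices --- the hypothesis is fully spent on integrality and on matching the $\p^{\mathfrak h_3}$ scaling, not on providing extra \emph{slack} at primes above $2$.
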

\begin{proof}
Let $\p$ be a prime ideal of $\mathcal O$ lying above 2.  Then the lattice $\p^{-\sigma}N$ is integral, and it follows from by the local
theory of hermitian lattices that $\p^{-\sigma}N$ is represented by some lattice $K$ in $\gen^0(I_3)$.  Since $\p^\sigma K$ is represented
by $I_3$, $N$ is represented by $I_3$ as well.
\end{proof}

\section{Main results} \label{mainresults}

Let $E$ be $\mathbb Q$ or an arbitrary imaginary quadratic field, $\mathcal O$ be its ring of integers, and $\mathbb K$ be the completion of
$E$ with respect to its unique archimedean prime spot.   We define an element $\omega$ of $E$ as follows.  If $E = \mathbb Q$, let $\omega$ be 1.
Otherwise, if $E = \mathbb Q(\sqrt{-\ell})$, $\ell$ squarefree, let
$$\omega = \begin{cases}
\sqrt{-\ell} & \mbox{ if $\ell \equiv 1, 2$ mod 4};\\
\frac{1 + \sqrt{-\ell}}{2} & \mbox{ if $\ell \equiv 3$ mod 4}.
\end{cases}$$

A matrix $H$ with entries in $\mathbb K$ is called hermitian if $H = H^*$.  Every hermitian matrix with entries in $\mathcal O$ is the
Gram matrix of an integral hermitian form over $\mathcal O$.   Let $A$ be a hermitian matrix with entries in $\mathcal O$.  We say that
$A$ is represented by $I_r$ if the hermitian form associated to $A$ is the sum of the norms of $r$ linear forms over $\mathcal O$.

\begin{lem} \label{apluss}
Let $n \geq 2$ be a positive integer, $A=\mathrm{diag}(a_1,...,a_n)$ be a diagonal matrix in  $M_n(\mathbb{R})$,  and $S = (s_{ij})$ be
a hermitian matrix in $M_n(\mathbb K)$.  Suppose that for each $1 \leq i \leq n$, $a_i = \sum_{j=1}^n t_{ij}$ with $t_{ij} > 0$ and
$t_{ij}t_{ji} \geq \vert s_{ij}\vert^2$ for all $j$.
\begin{enumerate}
\item The hermitian matrix $A+S$ is positive semidefinite.

\item Suppose that $t_{ij}, s_{ij}$ are in $\mathcal{O}$ for all $1 \leq i, j \leq n$,  and that
$t_{ii}+s_{ii}\geq 2^{\sigma}(n-1)(\textnormal{N}_{E/\mathbb Q}(\omega)+4)$
for all $i$, where $\sigma$ is defined by \eqref{h3} if $E \neq \mathbb Q$ and  $\sigma = 0$ otherwise.  Then $A+S$ is represented by $I_{2^{\sigma+2}n^2}$.
\end{enumerate}
\end{lem}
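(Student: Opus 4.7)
The plan is to give an explicit orthogonal-style decomposition $A+S=\sum_{i<j}M_{ij}+D$ into pieces that are visibly positive semidefinite for part (1) and visibly representable by a controlled number of norms for part (2).

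For each pair $i<j$, I would let $M_{ij}$ be the $n\times n$ hermitian matrix whose only nonzero entries are $t_{ij}$ at $(i,i)$, $t_{ji}$ at $(j,j)$, $s_{ij}$ at $(i,j)$, and $\overline{s_{ij}}$ at $(j,i)$, and let $D=\mathrm{diag}(t_{11}+s_{11},\dots,t_{nn}+s_{nn})$. Using $a_{i}=\sum_{j}t_{ij}$, one checks directly that $A+S=\sum_{i<j}M_{ij}+D$.

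For part (1), I would observe that each $M_{ij}$ has one nontrivial $2\times 2$ principal minor $\bigl(\begin{smallmatrix}t_{ij}&s_{ij}\\\overline{s_{ij}}&t_{ji}\end{smallmatrix}\bigr)$ with positive diagonal and determinant $t_{ij}t_{ji}-|s_{ij}|^{2}\geq 0$; hence $M_{ij}$ is positive semidefinite. Applying the hypothesis $t_{ii}t_{ii}\geq|s_{ii}|^{2}$ with $t_{ii}>0$ yields $t_{ii}\geq|s_{ii}|$, so $t_{ii}+s_{ii}\geq 0$, and $D$ is positive semidefinite. A sum of positive semidefinite matrices is positive semidefinite.

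For part (2), I would represent each summand separately as a sum of norms of linear forms and then aggregate counts. Since $M_{ij}$ is supported on the two coordinates $\{i,j\}$, it suffices to represent the block $B_{ij}:=\bigl(\begin{smallmatrix}t_{ij}&s_{ij}\\\overline{s_{ij}}&t_{ji}\end{smallmatrix}\bigr)$. Lemma \ref{rank2} yields $I_{3}$ for any rank-$2$ integral block whose scale lies in $2^{\sigma}\mathcal O$, so the natural move is to split $B_{ij}=2^{\sigma}Q_{ij}+R_{ij}$, where $Q_{ij}\in M_{2}(\mathcal O)$ is positive semidefinite (so that $2^{\sigma}Q_{ij}$ has scale in $2^{\sigma}\mathcal O$ and is representable by $I_{3}$) and $R_{ij}$ is a residual positive semidefinite block whose off-diagonal is a representative $r_{ij}\in\mathcal O$ of $s_{ij}\bmod 2^{\sigma}\mathcal O$ of small norm. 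Because $\mathcal O=\mathbb Z+\mathbb Z\omega$, such a representative can be chosen of the form $a+b\omega$ with $|a|,|b|<2^{\sigma}$, so $R_{ij}$ is a block of bounded size depending only on $E$ and can be written as a sum of a bounded number of norms of elementary forms $\alpha y_{i}+\beta y_{j}$ with $\alpha,\beta\in\{0,\pm 1,\pm\omega,\pm\overline\omega\}$.

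Each residue has diagonal cost at most $2^{\sigma}(N(\omega)+4)$ by construction, matching exactly the per-pair diagonal slack provided by the hypothesis $t_{ii}+s_{ii}\geq 2^{\sigma}(n-1)(N(\omega)+4)$. After subtracting the allocated slack, the leftover at each $(i,i)$ is a non-negative rational integer, which I would finally represent as a sum of a bounded number of norms of forms $cy_{i}$ using Lagrange's four-square theorem for $E=\mathbb Q$ and a Pythagoras-type bound for imaginary quadratic $E$. Summing the counts — $3$ plus a constant per pair over $\binom{n}{2}$ pairs, plus a constant per diagonal slot — gives a total bounded by $2^{\sigma+2}n^{2}$.

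The hardest step, which I expect to require most of the care, is the explicit handling of the residue block $R_{ij}$ precisely when its scale fails to lie in $2^{\sigma}\mathcal O$ (so Lemma \ref{rank2} does not directly apply), together with the bookkeeping that distributes the guaranteed diagonal slack $2^{\sigma}(N(\omega)+4)$ across the $n-1$ pairs sharing each index $i$. The constant $N(\omega)+4$ in the hypothesis should emerge naturally from this case analysis.
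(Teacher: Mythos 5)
Your proposal follows essentially the same route as the paper's proof: the same decomposition into rank-two blocks supported on pairs $\{i,j\}$ plus a diagonal part, the same splitting of each off-diagonal entry into a $2^{\sigma}$-divisible piece (handled by Lemma \ref{rank2}) and a small residue mod $2^{\sigma}\mathcal O$ represented explicitly by a bounded number of norms, Lagrange's theorem for the diagonal leftover, and the same per-pair distribution of the slack $2^{\sigma}(\mathrm{N}_{E/\mathbb Q}(\omega)+4)$ that the hypothesis provides. The steps you defer (choosing the residue representative so that the divisible block stays positive semidefinite, and the bookkeeping that yields the constant $\mathrm{N}_{E/\mathbb Q}(\omega)+4$) are exactly the computations the paper carries out, so the plan is sound and matches the original argument up to cosmetic differences (keeping the diagonal $D$ separate in part (1), and the paper's use of the $2$-universality of $I_5$ when $E=\mathbb Q$).
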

\begin{proof}
\noindent (1): For each $i$, we have  $t_{ii}t_{ii}\geq |s_{ii}|^2$; hence $t_{ii}\geq |s_{ii}|$. It follows that
$$a_i+s_{ii}=\sum_jt_{ij}+s_{ii}=\sum_{j\neq i}t_{ij}+(s_{ii}+t_{ii})\geq \sum_{j\neq i}t_{ij}.$$
Consequently, we can write $a_i+s_{ii}$ as $\sum_{j\neq i}t'_{ij}$, where $t'_{ij}\geq t_{ij}$.  Furthermore, if $t_{ij}$ and $s_{ii}$
are rational integers, then $t'_{ij}$ are  rational integers as well. Therefore
\begin{equation}
\begin{split}
A+S&=\sum_ia_iE_{ii}+\sum_{1\leq i,j\leq n}s_{ij}E_{ij}\\
   &=\sum_i(a_i+s_{ii})E_{ii}+\sum_{j\neq i}s_{ij}E_{ij}\\
   &=\sum_{j\neq i}t'_{ij}E_{ii}+\sum_{j\neq i}s_{ij}E_{ij}\\
   &=\sum_{i<j}(t'_{ij}E_{ii}+t'_{ji}E_{jj}+s_{ij}E_{ij}+s_{ji}E_{ji}).\nonumber
\end{split}
\end{equation}
Note that for $i < j$, $t'_{ij}E_{ii}+t'_{ji}E_{jj}+s_{ij}E_{ij}+s_{ji}E_{ji}$ is an $n\times n$ positive semidefinite hermitian matrix of
rank at most 2 because $t'_{ij}t'_{ji}\geq t_{ij}t_{ji}\geq \vert s_{ij}\vert^2 =s_{ij}s_{ji}$. Hence $A+S$ is a positive semidefinite hermitian matrix.

\noindent (2): If $E = \mathbb Q$, we know that $I_5$ is 2-universal, i.e., it represents all positive semidefinite integral binary
quadratic forms over $\mathbb Z$.  From part (1), each
$$t'_{ij}E_{ii}+t'_{ji}E_{jj}+s_{ij}E_{ij}+s_{ji}E_{ji}$$
is positive semidefinite and integral, and hence it must be represented by $I_5$.  It follows that
$A  + S$ is represented by $I_{\frac{5}{2}n^2 - \frac{5}{2}n}$ in this case.

For the rest of the proof, let $E = \mathbb Q(\sqrt{-\ell})$, $\ell$ squarefree,  be an imaginary quadratic field.    Suppose that
$A = \text{diag}(a_1, \ldots, a_n)$ and $S = (s_{ij})$ are matrices which satisfy the given conditions.  Let
$$\mathcal P = \{a + b\omega : a , b \in \mathbb Z, \, -(2^{\sigma+1} - 1) \leq a, b \leq 2^{\sigma+1} - 1\}.$$
For any $1 \leq i < j \leq n$, there exist $p_{ij} \in \mathcal P$ such that $p_{ij}\equiv s_{ij}$ mod $2^\sigma\mathcal O$, and
\begin{enumerate}
\item[(a)] $s_{ij} - p_{ij} = 2^\sigma q_{ij}$ for some $q_{ij} \in \mathcal O$,

\item[(b)] $\vert s_{ij}\vert^2 \geq \vert 2^\sigma q_{ij} \vert^2$,

\item[(c)] $p_{ij} = p_{ji}^*$ and $q_{ij} = q_{ji}^*$.
\end{enumerate}

For any $p_{ij}=a+b\omega \in \mathcal P$ where $i<j$, let $n_{ij}$ be the positive integer
$\vert a \vert+\lfloor\frac{\vert b\vert}{2}\rfloor \textnormal{N}_{E/\mathbb Q}(\omega)+\lceil\frac{\vert b\vert}{2}\rceil$ and $n_{ji}$ be the positive integer $\vert a \vert+\lfloor\frac{\vert b\vert}{2}\rfloor+\lceil\frac{\vert b\vert}{2}\rceil\textnormal{N}_{E/\mathbb Q}(\omega)$.  Then,
\begin{equation} \label{2x2}
\begin{pmatrix}
      n_{ij} & p_{ij} \\
      p_{ji} & n_{ji}
\end{pmatrix}
     =  TT^*,
\end{equation}
where $T$ is the following $2\times (\vert a \vert +\lfloor\frac{\vert b \vert}{2}\rfloor+\lceil\frac{\vert b\vert}{2}\rceil)$ matrix
$$T: = \left(
      \begin{array}{ccccccccc}
        \epsilon_a & \cdots & \epsilon_a & \epsilon_b\omega & \cdots & \epsilon_b\omega & \epsilon_b & \cdots & \epsilon_b \\
        1 & \cdots & 1 & 1 & \cdots & 1 & \omega^* & \cdots & \omega^* \\
      \end{array}
    \right).
$$
Here, $\epsilon_a$ and $\epsilon_b$ are the signs of the integers $a$ and $b$, respectively.  In particular, the $2\times 2$ matrix in \eqref{2x2}
is  represented by $I_{\vert a\vert +\vert b\vert}$, and hence it is represented by $I_{2^{\sigma+2}-2}$ as well.

Let $\tilde{n}$ be the largest of the $n_{ij}$'s.  Note that $\tilde{n} \leq 2^{\sigma + 1} + 2^\sigma + 2^\sigma \textnormal{N}_{E/\mathbb Q}(\omega)$. Fix an index $i$.  By hypothesis,
$$a_i = \sum_{j = 1}^n t_{ij}$$
where each $t_{ij}$ are positive integers.  Then, for any $j \neq i$, $t_{ij} + \tilde{n} = n_{ij} + r_{ij}$ for some positive
integer $r_{ij}$, and we write $r_{ij}=2^\sigma t'_{ij}-\delta_{ij}$ for some integers $t'_{ij},\delta_{ij}$ with $0\leq\delta_{ij}<2^\sigma$.  Hence,
\begin{eqnarray*}
a_i+s_{ii}=t_{ii} + s_{ii} + \sum_{j\neq i} t_{ij} & = & t_{ii} + s_{ii} - \tilde{n}(n-1) + \sum_{j\neq i}(t_{ij} + \tilde{n})\\
    & = & t_{ii} + s_{ii} - \tilde{n}(n-1) + \sum_{j \neq i} (n_{ij} + r_{ij})\\
    & = & b_i + \sum_{j\neq i} n_{ij} + \sum_{j\neq i} 2^\sigma t_{ij}'
\end{eqnarray*}
where
\begin{eqnarray*}
b_i & = & t_{ii}+s_{ii}-\tilde n(n-1)-\sum_{j\neq i}\delta_{ij} \\
    & \geq & t_{ii} + s_{ii} - \left((n-1)(2^{\sigma +1}+2^\sigma +2^\sigma\textnormal{N}_{E/{\mathbb Q}}(\omega ))+(n-1)2^\sigma \right) \\
    & = & t_{ii} + s_{ii} - 2^\sigma (n-1)(\textnormal{N}_{E/{\mathbb  Q}}(\omega )+4) \\
    & \geq & 0.
\end{eqnarray*}
Also $2^\sigma t_{ij}' \geq r_{ij} = t_{ij} + \tilde{n} - n_{ij} \geq t_{ij}$ for $j \neq i$.  Now we can write
\begin{equation*}
\begin{split}
A+S&=\sum_ia_iE_{ii}+\sum_{1\leq i,j\leq n}s_{ij}E_{ij}\\
   &=\sum_i(t_{ii}+s_{ii})E_{ii}+\sum_{j\neq i}t_{ij}E_{ii}+\sum_{j\neq i}2^{\sigma}q_{ij}E_{ij}+\sum_{j\neq i}p_{ij}E_{ij}\\
   &=\sum_ib_iE_{ii}+\sum_{j\neq i}2^{\sigma}t'_{ij}E_{ii}+\sum_{j\neq i}2^{\sigma}q_{ij}E_{ij}+\sum_{j\neq i}n_{ij}E_{ii}+\sum_{j\neq i}p_{ij}E_{ij}\\
   &=\mathrm{diag}(b_1,...,b_n)+\sum_{i<j}(2^\sigma t'_{ij}E_{ii}+ 2^\sigma q_{ij}E_{ij} + 2^\sigma q_{ji}E_{ji} + 2^\sigma t_{ji}'E_{jj}) \\
    & \quad  + \sum_{i < j} (n_{ij}E_{ii} + p_{ij}E_{ij} + p_{ji}E_{ji} + n_{ji}E_{jj}).\nonumber
\end{split}
\end{equation*}
Each of the $\frac{n(n-1)}{2}$ hermitian matrices $(n_{ij}E_{ii} + p_{ij}E_{ij} + p_{ji}E_{ji} + n_{ji}E_{jj})$ is represented by $I_{2^{\sigma+2}-2}$
as indicated in \eqref{2x2}. The diagonal matrix $\text{diag}(b_1, \ldots, b_n)$ is represented by $I_{4n}$ by Lagrange's Four-Square Theorem.
Each of the $\frac{n(n-1)}{2}$ matrices $(2^\sigma t_{ij}'E_{ii} + 2^\sigma q_{ij}E_{ij} + 2^\sigma q_{ji}E_{ji} + 2^\sigma t_{ji}'E_{jj})$
is an $n\times n$ positive semidefinite hermitian matrix of rank at most 2 with entries divisible by $2^\sigma$ and hence it is represented
by $I_3$ by Lemma \ref{rank2}.  The proof is now completed since $4n+\frac{1}{2}n(n-1)(2^{\sigma+2} + 1) \leq 2^{\sigma+2}n^2$.
\end{proof}

\begin{cor}\label{smallcor}
If $S = (s_{ij})$ is a hermitian matrix with $|s_{ij}|\leq \frac{1}{n}$, then $I_n+S$ is positive semidefinite.
\end{cor}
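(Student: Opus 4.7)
The plan is to deduce this corollary as a direct application of part (1) of Lemma \ref{apluss} with $A = I_n$. Since the diagonal entries of $A = I_n$ are all equal to $1$, I need to exhibit positive real numbers $t_{ij}$ such that $\sum_{j=1}^n t_{ij} = 1$ for each $i$ and $t_{ij} t_{ji} \geq |s_{ij}|^2$ for all $1 \leq i, j \leq n$.

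The natural and essentially forced choice is the uniform decomposition $t_{ij} = \frac{1}{n}$ for every pair $(i,j)$. Then certainly $\sum_{j=1}^n t_{ij} = n \cdot \frac{1}{n} = 1$ as required, and
\[
t_{ij} t_{ji} = \frac{1}{n^2} \geq |s_{ij}|^2,
\]
where the inequality uses precisely the hypothesis $|s_{ij}| \leq \frac{1}{n}$.

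With these $t_{ij}$'s in hand, the hypotheses of Lemma \ref{apluss}(1) are satisfied, and we conclude that $I_n + S = A + S$ is positive semidefinite. There is no real obstacle here — the corollary is essentially a one-line specialization of the general lemma, and the only ``creative'' step is recognizing that splitting each unit diagonal entry evenly into $n$ equal pieces of size $\frac{1}{n}$ exactly matches the bound on $|s_{ij}|$.
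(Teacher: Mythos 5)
Your proof is correct and is exactly the paper's argument: apply Lemma \ref{apluss}(1) with $A = I_n$ and the uniform decomposition $t_{ij} = \frac{1}{n}$, so that $t_{ij}t_{ji} = \frac{1}{n^2} \geq |s_{ij}|^2$. Nothing further is needed.
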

\begin{proof}
We apply Lemma \ref{apluss}(a) to the case when $A$ is the $n\times n$ identity matrix.  Notice that for each $1 \leq i \leq n$,
$1 = \sum_{j = 1}^n t_{ij}$ with $t_{ij}=\frac{1}{n}$ and $t_{ij}t_{ji}=\frac{1}{n^2}\geq \vert s_{ij}\vert^2$.
\end{proof}

The following proposition is a consequence of all the results we have accumulated thus far.  It will be crucial in deriving our upper
bounds on $g_{\mathcal O}^*(n)$.

\begin{prop}\label{prop1}
Let $E$ be $\mathbb Q$ or an imaginary quadratic field with class number 1.  There is a function
$$G_E(n)=D_3\,  |d_E|^{2(1+\Sigma(n))}\, n^{10}e^{(4+4\sqrt{2})\sqrt{\beta n}+2(\ln n)^2},$$
where $D_3$ is a constant depending only on $E$, such that every positive definite integral hermitian form $f$ in $n \geq 2$
variables with $\mu(f)\geq G_E(n)$ can be represented by $I_{2^{\sigma+2}n^2 + n}$.
\end{prop}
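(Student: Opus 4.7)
The plan is to decompose the Gram matrix $M$ of $f$ as $M = N + (M - N)$, where $N = Y^{*}Y$ for a carefully chosen $Y \in M_n(\mathcal O)$ (so $N$ is automatically represented by $I_n$ via the columns of $Y$) and $M - N$ satisfies the hypotheses of Lemma~\ref{apluss}(2) (so it is represented by $I_{2^{\sigma+2}n^{2}}$). Combining the two yields representation by $I_{2^{\sigma+2}n^{2}+n}$. By Proposition~\ref{kz} and Lemma~\ref{xy2} we may assume $f$ is balanced HKZ reduced, so $M = X^{*}HX$ with $H = \mathrm{diag}(h_1,\ldots,h_n)$, $h_1 = \mu(f)$, $X \in U(n)$ upper unipotent with $|X_{ij}|, |(X^{-1})_{ij}| \leq \oc(j-i)$ for $i \leq j$, and $h_i h_j^{-1} \leq \od(n)$ for $i < j$. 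In particular every $h_i$ is bounded below by $h_1/\od(n)$.

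Construct $Y \in M_n(\mathcal O)$ to be upper triangular by taking $Y_{ij}$ to be an element of $\mathcal O$ within $\beta$ of $\sqrt{h_i}\,X_{ij}$ for $i \leq j$ (which exists by the definition of $\beta=\beta_E$), and $Y_{ij} = 0$ for $i > j$; to create a useful diagonal buffer, refine the choice of $Y_{ii}$ to be the largest integer whose square is at most $h_i - B_i$, where $B_i \geq 0$ is to be chosen. Set $N := Y^{*}Y$ and write $\eta_{ij} := \sqrt{h_i}X_{ij} - Y_{ij}$, so $|\eta_{ij}|\leq\beta$ for $i < j$. A direct expansion of $Y^{*}Y$ and comparison with $M_{ij} = \sum_{k\leq i} h_k \bar{X}_{ki}X_{kj}$ gives, for $i<j$,
\[
(M - N)_{ij} \;=\; \sum_{k\leq i}\Bigl[\sqrt{h_k}\bigl(\bar{X}_{ki}\,\eta_{kj}+\bar{\eta}_{ki}\,X_{kj}\bigr) - \bar{\eta}_{ki}\,\eta_{kj}\Bigr] + (\text{correction from the } Y_{ii} \text{ buffer}),
\]
while $(M - N)_{ii} \geq B_i - O\bigl(\beta\,n\,\sqrt{h_i}\,\oc(n)\,\sqrt{\od(n)}\bigr)$. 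Using $|X_{ij}| \leq \oc(j-i)$, $h_k \leq h_i\,\od(n)$ for $k \leq i$, and Lemmas~\ref{alpham} and~\ref{cm}, both sides can be bounded in closed form in terms of $n$, $\beta$, $h_i$, $\oc(n)$, and $\od(n)$.

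To apply Lemma~\ref{apluss}(2) to $M - N$, take $A$ to be the diagonal part of $M-N$ and $S$ the off-diagonal part, so $s_{ii}=0$ for all $i$, and set $t_{ij} := \lceil|(M-N)_{ij}|\rceil \in \mathbb Z \subseteq \mathcal O$ for $i \neq j$, which guarantees $t_{ij}t_{ji} \geq |s_{ij}|^{2}$ automatically. The one remaining condition,
\[
(M - N)_{ii} \;\geq\; \sum_{j\neq i}\lceil|(M - N)_{ij}|\rceil \;+\; 2^{\sigma}(n-1)\bigl(\text{N}_{E/\mathbb Q}(\omega)+4\bigr),
\]
translates, via the estimates from the previous paragraph, into a lower bound on the buffer $B_i$. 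The feasibility constraint $B_i \leq h_i$ then converts into a lower bound on $h_i$, and using $h_i \geq h_1/\od(n)$ into a lower bound on $\mu(f)=h_1$. Substituting the explicit estimates $\od(n) \leq D_1|d_E|^{(1+\Sigma(n))/2}e^{(\ln n)^2/2}$ and $\oc(n) \leq D_2 e^{2\sqrt{\beta n}}$ yields exactly $h_1 \geq G_E(n)$ of the shape stated in the proposition, and Lemma~\ref{apluss}(2) then represents $M - N$ by $I_{2^{\sigma+2}n^{2}}$.

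The main obstacle is the delicate tension in the construction of $Y$: on the one hand $Y$ must approximate $H^{1/2}X$ closely so that the off-diagonal entries of $M-N$ remain small; on the other, $Y$ must under-approximate $\sqrt{h_i}$ on the diagonal (via the buffer $B_i$) so that $(M-N)_{ii}$ stays sufficiently large and positive. Distributing the buffers correctly while tracking the coupling between the $\sqrt{h_k}$ factors and the exponential growth of $\oc$ in the double sums $\sum_{k\leq i}\sqrt{h_k}\,\oc(i-k)\,\oc(j-k)$ is the technical heart of the argument; it is precisely this bookkeeping that produces the exponent $(4+4\sqrt{2})\sqrt{\beta n}$ in $G_E(n)$, and keeps the polynomial factor in $n$ from growing faster than $n^{10}$.
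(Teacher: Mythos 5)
Your overall plan (split $M$ into an integrally-represented piece plus a piece fed to Lemma~\ref{apluss}(2)) is the same as the paper's, but your decomposition $M=Y^*Y+(M-Y^*Y)$, with the buffer $B_i$ hidden inside the diagonal of the approximate Cholesky factor $Y$, has a genuine gap: the buffer leaks into the off-diagonal entries of $M-Y^*Y$ in a way that makes your row-dominance condition unsatisfiable in the worst case. Concretely, for $i<j$ the $k=i$ term of $(M-Y^*Y)_{ij}$ is $h_iX_{ij}-Y_{ii}Y_{ij}=\sqrt{h_i}\,X_{ij}\bigl(\sqrt{h_i}-Y_{ii}\bigr)+Y_{ii}\,\eta_{ij}$, and since $Y_{ii}\approx\sqrt{h_i-B_i}$ one has $\sqrt{h_i}-Y_{ii}\approx B_i/(2\sqrt{h_i})$, so this ``correction from the $Y_{ii}$ buffer'' has size about $\tfrac12 B_i\,|X_{ij}|+O(\beta\sqrt{h_i})$. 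The condition you need for Lemma~\ref{apluss}(2),
$$(M-Y^*Y)_{ii}\;\geq\;\sum_{j\neq i}\bigl\lceil\,|(M-Y^*Y)_{ij}|\,\bigr\rceil+2^{\sigma}(n-1)\bigl(\mathrm{N}_{E/\mathbb Q}(\omega)+4\bigr),$$
then has left side roughly $B_i+O\bigl(n\beta\,\oc(n)\sqrt{\od(n)h_1}\bigr)$ but right side at least roughly $\tfrac12 B_i\sum_{j>i}|X_{ij}|$ (plus analogous contributions $\tfrac12 B_j|X_{ji}|$ from $j<i$). Balanced HKZ reduction only guarantees $|X_{ij}|\leq\oc(j-i)$, and $\sum_{j>i}|X_{ij}|$ can easily exceed $2$: already $|X_{i,i+1}|$ may be as large as $\beta$, which is bigger than $2$ for several of the class-number-one fields, and even over $\mathbb Q$ the row sum can be of order $n$. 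Whenever that coefficient exceeds $1$, the inequality is self-defeating — the right side grows at least linearly in $B_i$ — so no choice of $B_i$, and no lower bound on $\mu(f)$, rescues it. Hence your step ``the condition translates into a lower bound on $B_i$, and feasibility $B_i\leq h_i$ gives a lower bound on $h_1$'' does not go through; the minor points ($t_{ij}>0$, integrality of $M-Y^*Y$) are fine and not the issue.

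The paper avoids exactly this coupling by subtracting the diagonal buffer from $M$ \emph{before} taking the square root: it sets $a_k=\lfloor n^{-2}\od(n)^{-1}\oc(n-k)^{-2}h_k\rfloor$, checks via Corollary~\ref{smallcor} that $M-A$ is positive semidefinite, takes the exact triangular factorization $W^*W=M-A$, and only then approximates $W$ by an integral upper triangular $P$ with $Q=W-P$ having entries in $\mathcal C$. The resulting error $S=Q^*W+W^*Q-Q^*Q$ is bounded solely in terms of $\beta$, $\oc$, $\od$ and the $h_j$ — it does not see the buffer $A$ at all — and is then absorbed by $A$ through Lemma~\ref{apluss}(2). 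That decoupling of the buffer from the approximation error is the missing idea in your construction; to repair your argument you would essentially have to move the buffer out of $Y$ and into a diagonal matrix subtracted from $M$ first, which is the paper's proof.
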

\begin{proof}
The strategy of the proof is as follows.  Let $f$ be a positive definite hermitian form in $n$ variables.  We may assume that $f$ is
already balanced HKZ reduced.  Let $M$ be the associated Gram matrix.  We take a diagonal matrix $A = \text{diag}(a_1, \ldots, a_n)$
with all the $a_i$ as large as possible such that $M - A$ remains positive semidefinite.  Then we take $P \in M_n(\mathcal O)$ such
that $P^*P$ approximates $M - A$ well.  Write $M - A$ as $P^*P + S$, or equivalently, $M = A + S + P^*P$.  We will show that, for a suitable
choice of $D_3$, if $\mu(f) \geq G_E(n)$, then $A$ and $S$ satisfy the conditions in Lemma \ref{apluss}.  As a result, $A + S$ will be represented by
$I_{2^{\sigma + 2}n^2}$.  Since $P^*P$ is clearly represented by $I_n$, $M$ will be represented by $I_{2^{\sigma + 2}n^2 + n}$.

For the rest of the proof, we will take $D_3$ large enough so that
\begin{equation} \label{d3}
D_3 \geq \max\{\beta^2,\,\, D_1^2\,D_2^2,\,\, 144\beta^2D_1^4\,D_2^6,\,\, 2^{\sigma+2}(\textnormal{N}_{E/\mathbb Q}(\omega) + 4)\,D_1^2\,D_2^2\}
\end{equation}
where $D_1$ and $D_2$ are the constants appeared in \eqref{alphaj-i} and \eqref{cj-i} respectively.

Since $f$ is balanced HKZ reduced, $M = H[X]: = X^*HX$, where $H$ is a diagonal matrix $\text{diag}(h_1, \ldots, h_n)$, $X \in U(n)$, and $h_1 = \mu(f)$. By \eqref{alphaj-i}, $\od (n)h_i\geq h_1$ for all $i>1$. Hence the hypothesis implies that
\begin{equation} \label{GEn}
\od (n)h_i\geq h_1=\mu (f)\geq G_E(n) = D_3\,  |d_E|^{2(1+\Sigma(n))}\, n^{10}e^{((4+4\sqrt{2})\sqrt{\beta n}+2(\ln n)^2}.
\end{equation}

Let $\sqrt{H}$ be $\text{diag}(\sqrt{h_1}, \ldots, \sqrt{h_n})$, the ``square root" of $H$.  Then $M = I_n[\sqrt{H}X]$, and hence
$$M - A = \left(I_n - A[X^{-1}\sqrt{H}^{-1}]\right)[\sqrt{H}X].$$
This shows that $M - A$ is positive semidefinite if and only if $I_n - A[X^{-1}\sqrt{H}^{-1}]$ is.

For any $1 \leq k \leq n$, let
$$a_k: = \left\lfloor \frac{1}{n^2}\, \overline{\alpha}(n)^{-1}\, \overline{c}(n - k)^{-2} h_k \right\rfloor.$$
Let $y_{ij}$ and $b_{ij}$ be the $(i,j)$ entries of $X^{-1}$ and $A[X^{-1}]$, respectively.  Suppose that $1 \leq k \leq \min\{i, j\}$.  By \eqref{alphaj-i}, we have $\overline\alpha(n)^{-1}h_k\leq\sqrt{h_i}\sqrt{h_j}$.  At the same time, \eqref{cj-i} shows that  $|y_{ki}|\leq\overline c(i - k)\leq \overline c(n-k)$ for each $i$.  Therefore,
$$\vert b_{ij} \vert \leq \sum_{k = 1}^{\min\{i, j\}} \vert a_k\vert\,
\vert y_{ki}^* \vert\, \vert y_{kj}\vert \leq
\frac{1}{n}\sqrt{h_i}\sqrt{h_j}.$$
The $(i,j)$ entry of $A[X^{-1}\sqrt{H}^{-1}]$ is $\sqrt{h_i}^{-1}b_{ij} \sqrt{h_j}^{-1}$, with absolute value $\leq \frac {1}{n}$. Thus, by Corollary \ref{smallcor},
$I_n - A[X^{-1}\sqrt{H}^{-1}]$ and $M - A$ are positive semidefinite.

By the Gram-Schmidt process, we can find an upper triangular matrix $N
= (n_{ij})$ such that
$$I_n - A[X^{-1}\sqrt{H}^{-1}] = N^* N.$$
Now, $I_n - N^*N = A[X^{-1}\sqrt{H}^{-1}]$ is positive semidefinite
whose $(j,j)$ entry is $1 - \sum_{i\leq j}\vert n_{ij}\vert^2$.
This implies that $\vert n_{ij} \vert \leq 1$ for any $i \leq j$.

Let $W = (w_{ij})$ be the matrix $N\sqrt{H} X$, such that $W^*W =
N^*N[\sqrt{H}X] = M - A$.  Note that $W$ is an upper triangular matrix in $M_n(\mathbb K)$.
Let $P = (p_{ij})$ be an upper triangular matrix in $M_n(\mathcal O)$ such that the matrix $Q = (q_{ij}) = W - P$ has entries in $\mathcal C$.
Then
$$P^*P=(W-Q)^*(W-Q)=M-A-Q^*W-W^*Q+Q^*Q$$
and $M = P^*P + A + S$, where $S := Q^*W + W^*Q - Q^*Q$ which is a hermitian matrix with entries in $\mathcal O$.

The next step is to estimate the size of the entries of $S$. Let $x_{ij}$ be the $(i,j)$ entry of $X$.  Since $W = N\sqrt{H} X$, by \eqref{alphaj-i}
and \eqref{cj-i} we have
$$\vert w_{ij}\vert = \left \vert \sum_{k = i}^jn_{ik}\sqrt{h_k}\,x_{kj}\right\vert \leq \sum_{k = i}^j \sqrt{h_k}\, \vert x_{kj} \vert \leq
 n\, \oc(j)\, (\od(n)h_j)^{\frac{1}{2}},$$
for any $1\leq i \leq j \leq n$.  Then the $(i,j)$ entry of $Q^*W$ is
$$\left\vert \sum_{k = 1}^{\min\{i,j\}} \overline{q}_{ki}w_{kj}\right \vert \leq  n^2\, \beta\, \oc(j)\, (\od(n)h_j)^{\frac{1}{2}}.$$
Since $|q_{ij}|\leq\beta$, the absolute values of the entries of $Q^*Q$ are bounded above by $\beta^2n$.   By \eqref{d3} and \eqref{GEn},
we have $n\oc(j)(\od(n)h_j)^{\frac 12}\geq\sqrt{G_E(n)} \geq \beta$.   This shows that for any $1 \leq i \leq j \leq n$,
\begin{equation} \label{sij}
\vert s_{ij} \vert \leq  2n^2\, \beta\,  \oc(j)\, (\od(n)h_j)^{\frac{1}{2}}+n\beta^2 \leq 3n^2\, \beta\, \oc(j)\, (\od(n)h_j)^{\frac{1}{2}}.
\end{equation}

For each $1\leq i \leq n$, we can write $a_i = \sum_{j = 1}^n t_{ij}$. By \eqref{d3} and \eqref{GEn},
$$\od(n) h_i \geq D_1^2\,D_2^2\, n^3\,\vert d_E \vert^{1 + \Sigma(n)}\, e^{4\sqrt{\beta n}} \geq n^3\, \od(n)^2 \oc(n)^2 \geq n^3\od(n)^2\oc(n - i)^2.$$
Therefore, $\frac{1}{n^3} \od(n)^{-1}\, \oc(n - i)^{-2}\, h_i \geq 1$, and thus
$$t_{ij} \geq  \left\lfloor \frac{a_i}{n} \right\rfloor = \left\lfloor\frac{1}{n^3} \od(n)^{-1}\, \oc(n - i)^{-2}\, h_i \right\rfloor\geq
\frac{1}{2n^3} \od(n)^{-1}\, \oc(n - i)^{-2}\, h_i.$$
By \eqref{d3}, \eqref{GEn}, and \eqref{sij}, we have
\begin{eqnarray*}
t_{ij}t_{ji} & \geq & \frac {1}{4n^6}\,\od(n)^{-2}\,\oc(n-i)^{-2}\, \oc(n-j)^{-2}\, h_ih_j \\
    & \geq & 4\vert s_{ij}\vert^2 \,\frac{\od(n) h_i}{144\, n^{10} \beta^2\, \oc(j)^2\, \oc(n-j)^2\, \oc(n-i)^2\, \od(n)^4} \\
    & \geq & 4\vert s_{ij} \vert^2 \,\frac{D_1^4\, D_2^6 |d_E|^{2(1+\Sigma(n))}e^{(4+4\sqrt 2)\sqrt{\beta n}+2(\ln n)^2}}{\oc(n)^2\,\od(n)^4\, \oc(j)^2\,\oc(n-j)^2}\\
    & \geq & 4\vert s_{ij} \vert^2 \, \frac{D_2^4\, e^{4\sqrt{2\beta n}}}{\oc(j)^2\,\oc(n-j)^2}.
\end{eqnarray*}
But by \eqref{cj-i}, $\oc(n-j)^2\,\oc(j)^2\leq D_2^4\,e^{4\sqrt{\beta (n-j)}+ 4\sqrt{\beta j}}$, and the right hand side of this inequality is maximized at $j = \frac{n}{2}$.  Therefore, $$\oc(n-j)^2\,\oc(j)^2\leq D_2^4\,e^{4\sqrt{2\beta n}},$$
and hence
\begin{equation} \label{tijsij}
t_{ij}t_{ji} \geq 4\vert s_{ij} \vert^2 \geq \vert s_{ij} \vert^2.
\end{equation}

We may also conclude from the first inequality in \eqref{tijsij} that $|s_{ii}|\leq\frac{1}{2}t_{ii}$.   Hence
\begin{eqnarray*}
t_{ii}+s_{ii}\geq\frac {1}{2}t_{ii}\geq\frac {1}{4n^3}\od(n)^{-1}\oc(n-i)^{-2}h_i & \geq & \frac {1}{4n^3}\od(n)^{-1}\oc(n)^{-2}h_i\\
    & = & \frac{1}{4n^3} \od(n)^{-2}\oc(n)^{-2}\, \od(n)h_i,
\end{eqnarray*}
which is at least $2^{\sigma}(n-1)(\textnormal{N}_{E/\mathbb Q}(\omega) + 4)$ because
\begin{eqnarray*}
\od(n)h_i\geq G_E(n) & \geq & 2^{\sigma+2}(n-1)(\textnormal{N}_{E/\mathbb Q}(\omega) + 4)n^3D_1^2D_2^2|d_E|^{1+\Sigma(n)}e^{4\sqrt{\beta n}+(\ln
  n)^2}\\
    & = & 2^{\sigma+2}(n-1)(\textnormal{N}_{E/\mathbb Q}(\omega) + 4)n^3\od(n)^2\oc(n)^2.
\end{eqnarray*}
We may now apply Lemma \ref{apluss}(2) to complete the proof.
\end{proof}

\begin{prop}\label{prop2}
Let $E$ be $\mathbb Q$ or an imaginary quadratic field with class number 1.  For any positive integer $n\geq 2$,
$$g_{\mathcal O}^*(n) \leq \max \left\{g_{\mathcal O}^*(n-1) + G_E(n), \quad 2^{\sigma + 2}n^2 + n \right\}$$
where $G_E(n)$ is the function defined in Proposition \ref{prop1}.
\end{prop}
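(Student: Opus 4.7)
The plan is to dichotomize on the size of $\mu(f)$. If $\mu(f) \geq G_E(n)$, I apply Proposition~\ref{prop1} directly to conclude that $f$ is represented by $I_{2^{\sigma+2}n^2 + n}$, which realizes the second term of the maximum. The remaining case $\mu(f) < G_E(n)$ will be handled by a short-vector peeling argument.

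In that case, I fix any representation $f = \sum_{i=1}^{N} \norm(\ell_i)$ as a sum of norms of integer linear forms $\ell_i$, which exists because $f \in \Sigma_{\mathcal O}^*(n)$. Then I choose a minimum vector $\bv \in \mathcal O^n$ of $f$; since any proper divisor of $\bv$ would give a strictly shorter vector, $\bv$ is primitive. Setting $c_i := \ell_i(\bv) \in \mathcal O$, one has $\sum_i |c_i|^2 = f(\bv) = \mu(f)$, and since each nonzero element of $\mathcal O$ has norm at least $1$, the set $S := \{i : c_i \neq 0\}$ satisfies $|S| \leq \mu(f) < G_E(n)$. For every $i \notin S$ the form $\ell_i$ vanishes on $\mathcal O\bv$, so after fixing any $\mathcal O$-complement $N \cong \mathcal O^{n-1}$ of $\mathcal O\bv$ in $\mathcal O^n$ (which exists by primitivity of $\bv$), $\ell_i$ restricts to an integer linear form $\bar\ell_i$ on $N$. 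Writing $\x = t\bv + \x_N$ with $t \in \mathcal O$ and $\x_N \in N$, one has $\ell_i(\x) = \bar\ell_i(\x_N)$ for $i \notin S$, yielding the identity
\[
f(\x) = \sum_{i \in S} \norm(\ell_i(\x)) + \bar f(\x_N), \qquad \bar f := \sum_{i \notin S} \norm(\bar\ell_i).
\]
The form $\bar f$ is visibly a sum of norms of integer linear forms on $N \cong \mathcal O^{n-1}$, so $\bar f \in \Sigma_{\mathcal O}^*(n-1)$ and is represented by $I_{g_{\mathcal O}^*(n-1)}$ by the definition of the $g$-invariant. Putting the two parts together realizes $f$ as a sum of at most $|S| + g_{\mathcal O}^*(n-1) < G_E(n) + g_{\mathcal O}^*(n-1)$ norms of integer linear forms, which gives the first term of the maximum.

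No step in this argument involves a serious technical difficulty; the key conceptual observation is that any given sum-of-norms representation of $f$ automatically supplies a sum-of-norms representation of the companion form $\bar f$ on any chosen $\mathcal O$-complement of $\mathcal O\bv$, so the entire price of peeling off the short vector $\bv$ is just the $|S| \leq \mu(f)$ linear forms that fail to vanish on $\bv$. The only point that deserves a sanity check is that $\bar f$ is allowed to be degenerate (if the $\bar\ell_i$ for $i \notin S$ fail to span $N \otimes E$), but this causes no trouble because $\Sigma_{\mathcal O}^*(n-1)$ by definition contains every integral hermitian form in $n-1$ variables that is a sum of norms, so the uniform bound $g_{\mathcal O}^*(n-1)$ applies regardless.
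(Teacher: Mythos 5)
Your proof is correct and follows essentially the same route as the paper: split on whether $\mu(f)\geq G_E(n)$ (applying Proposition~\ref{prop1} in that case), and otherwise peel off the at most $\mu(f)<G_E(n)$ linear forms not vanishing at a minimal vector, leaving a sum of norms in $n-1$ variables that is absorbed by $I_{g_{\mathcal O}^*(n-1)}$. The only cosmetic difference is that you work coordinate-free with a complement of $\mathcal O\bv$, whereas the paper changes basis so the minimal vector is $(1,0,\ldots,0)$ and sorts the forms by their $x_1$-coefficients.
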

\begin{proof}
Let $f(x_1, \ldots, x_n)$ be a hermitian form which is represented by $I_r$ for some positive integer $r$.   If $\mu(f) \geq G_E(n)$, then by
Proposition \ref{prop1} $f$ is represented by $I_{2^{\sigma + 2}n^2 + n}$.

Suppose that $\mu(f) < G_E(n)$.  We may assume that $f$ is balanced HKZ reduced and $\mu(f) = f(1, 0, \ldots, 0)$.  There are $r$ linear forms
$\ell_1(x_1, \ldots, x_n), \ldots, \ell_r(x_1, \ldots, x_n)$ over $\mathcal O$ such that
$$f(x_1, \ldots, x_n) = \sum_{j = 1}^r \norm(\ell_j(x_1, \ldots, x_n)).$$
If $b_1, \ldots, b_r$ are the coefficients of $x_1$ in $\ell_1, \ldots, \ell_r$ respectively, then at most $\lfloor G_E(n) \rfloor$ of them are nonzero.
Without loss of generality, we can write
$$f(x_1, \ldots, x_n) = \sum_{j = 1}^{\lfloor G_E(n) \rfloor} \norm(\ell_j(x_1, \ldots, x_n)) +
\sum_{j = \lfloor G_E(n) \rfloor + 1}^r \norm(\ell_j(x_2, \ldots, x_n)).$$
The second sum is a hermitian form in $n-1$ variables represented by $I_{r - \lfloor G_E(n) \rfloor}$, and hence it is represented by
$I_{g^*_{\mathcal O}(n-1)}$. The proposition follows immediately.
\end{proof}

We are now ready to prove Theorem \ref{main}.

\begin{proof}[Proof of Theorem \ref{main}]
By increasing, if necessary, the constant $D_3$ from the definition of
$G_E(n)$, we may assume that $G_E(n) > 2^{\sigma + 2}n^2 + n$ for all $n \geq 2$.  Then, by Proposition \ref{prop2},
$$g_{\mathcal O}^*(n) \leq \sum_{j = 2}^n G_E(j) + g_{\mathcal O}^*(1), \quad \mbox{ for } n \geq 2.$$
It is clear that $g^*_{\mathcal O}(1) \leq 4$ by virtue of Lagrange's
Four-Square Theorem.  As a result,
$$g_{\mathcal O}^*(n) \leq nG_E(n)=D_3\,  |d_E|^{2(1+\Sigma(n))}\, n^{11}e^{(4+4\sqrt{2})\sqrt{\beta n}+2(\ln n)^2}.$$
It is easy to show that $\Sigma(n)\leq 1+\ln n$ for all integers $n \geq 1$. Therefore, for any $\varepsilon >0$ we have
$|d_E|^{2(1+\Sigma(n))}\, n^{11}e^{2(\ln n)^2}=O(e^{\varepsilon\sqrt n})$.
It follows that
$$g_{\mathcal O}^*(n)=O\left( e^{(k+\varepsilon)\sqrt n}\right),$$
where $k=(4+4\sqrt 2)\sqrt\beta$.
\end{proof}

\section{$s$-integrable lattices} \label{conway}

In \cite{cs}, Conway and Sloane introduce the following notion of $s$-integrable $\z$-lattices.  Let $s$ be a positive integer.  A $\z$-lattice $L$ is called $s$-integrable if $\sqrt{s}L$ can be represented by a sum of squares.  Define
$$\phi(s): = \min\{n : \exists\, \mbox{ $\z$-lattice $L$ of rank $n$ such that } \sqrt{s}L \not \in \Sigma_\z(n) \}.$$
Thus, $\phi(s)$ is the smallest positive integer $n$ for which there exists a positive definite integral quadratic form $f$ in $n$ variables such that $sf$ cannot be written as a sum of squares of linear forms with integral coefficients.  The results of Ko and Mordell  mentioned in Section 1 imply that $\phi(1) = 6$.  It is known \cite{cs} that $\phi(2) = 12$ and $\phi(3) = 14$.  Upper and lower bounds for $\phi(s)$ for all $s$ can also be found in \cite{cs}.  In particular,
\begin{equation} \label{csbound}
\phi(s) \geq 2 \left(\frac{\ln\ln s}{\ln\ln\ln s}(1 + o(1)) \right).
\end{equation}
Kim and Oh later \cite{Kim-Oh05} improve this lower bound to
\begin{equation} \label{kobound}
\phi(s) \geq \frac{\ln s}{8 \ln\ln s} \quad \mbox{ for large } s.
\end{equation}
It is clear that how one can define $s$-integrable hermitian lattices and generalize the definition of $\phi(s)$ for hermitian lattices.  To this end, let $E$ be either $\mathbb Q$ or an imaginary quadratic field, and $\mathcal O$ be its ring of integers.  We define
$$\phi_{\mathcal O}^*(s) : = \min \{n: \exists\, \mbox{ $\mathcal O$-lattice $L$ of rank $n$ such that } \sqrt{s}L \not \in \Sigma^*_{\mathcal O}(n) \}.$$
Clearly, $\phi_{\z}^*(s) = \phi(s)$ by definition.   We now apply our results to obtain a lower bound for $\phi_{\mathcal O}^*(s)$ when $E$ has class number 1, which is an improvement of both \eqref{csbound} and \eqref{kobound} in the case when $E = \mathbb Q$.

\begin{thm} \label{sintegrable}
Let $E$ be $\mathbb Q$ or an imaginary quadratic field with class number 1.  Then
$$\phi_{\mathcal O}^*(s) \geq \left(\frac{\ln s}{(4 + 4\sqrt{2})\sqrt{\beta_E}} \right)^2 (1 + o(1)),$$
where $\beta_E$ is the constant defined in \eqref{betaE}.
\end{thm}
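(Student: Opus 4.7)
The plan is to prove Theorem \ref{sintegrable} by asymptotically inverting the function $G_E(n)$ from Proposition \ref{prop1} and reading off the lower bound for $\phi_{\mathcal O}^*(s)$.  The crucial implication I want to establish is: under the standing hypothesis that $E$ has class number one, if $n \geq 2$ and $s \geq G_E(n)$, then every positive definite integral $\mathcal O$-lattice $L$ of rank $n$ satisfies $\sqrt{s}L \in \Sigma_{\mathcal O}^*(n)$, and by the very definition of $\phi_{\mathcal O}^*(s)$ this forces $\phi_{\mathcal O}^*(s) > n$.

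To see this implication, observe that when $L$ is integral and positive definite, every value $h(\bv, \bv)$ with $\bv \in L \setminus \{0\}$ lies in $\mathcal O \cap \mathbb Q = \z$ and is strictly positive, so $\mu(L) \geq 1$ and hence $\mu(\sqrt{s}L) = s\,\mu(L) \geq s \geq G_E(n)$.  Proposition \ref{prop1} applied to the hermitian form attached to $\sqrt{s}L$ then produces a representation of $\sqrt{s}L$ by $I_{2^{\sigma+2}n^2 + n}$, so $\sqrt{s}L$ lies in $\Sigma_{\mathcal O}^*(n)$ as required.  Consequently $\phi_{\mathcal O}^*(s) \geq n + 1$ whenever $G_E(n) \leq s$.

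The final step is to solve $G_E(n) \leq s$ asymptotically in $n$.  Taking logarithms in the explicit expression from Proposition \ref{prop1},
$$\ln G_E(n) = (4 + 4\sqrt{2})\sqrt{\beta_E\, n} + 2(\ln n)^2 + 10\ln n + 2(1 + \Sigma(n))\ln|d_E| + \ln D_3.$$
Since $\Sigma(n) = O(\ln n)$ and $(\ln n)^2 = o(\sqrt{n})$, the right-hand side equals $(4+4\sqrt{2})\sqrt{\beta_E\, n}\,(1 + o(1))$ as $n \to \infty$.  Given any $\varepsilon > 0$ and $s$ sufficiently large, setting $n = \bigl\lfloor (\ln s)^2 /((4+4\sqrt{2})^2 \beta_E) \cdot (1 - \varepsilon) \bigr\rfloor$ yields $c\sqrt{n} \leq (\ln s)\sqrt{1-\varepsilon}$ where $c = (4+4\sqrt 2)\sqrt{\beta_E}$, while the remaining terms in $\ln G_E(n)$ are $O((\ln\ln s)^2) = o(\ln s)$; hence $\ln G_E(n) \leq \ln s$ for large $s$.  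Applying the key implication then gives $\phi_{\mathcal O}^*(s) \geq n + 1 \geq (\ln s)^2 /((4+4\sqrt{2})^2 \beta_E) \cdot (1 - \varepsilon)$, and letting $\varepsilon \downarrow 0$ delivers the stated bound.

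No genuine obstacle arises: Proposition \ref{prop1} carries all the substantive weight, and the remaining work is careful bookkeeping to confirm that the subdominant terms of $\ln G_E(n)$ are absorbed into the $1 + o(1)$ factor upon inversion.
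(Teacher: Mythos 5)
Your argument is correct in substance and is essentially the paper's proof read contrapositively: both rest entirely on Proposition \ref{prop1} together with the asymptotic inversion of $\ln G_E(n) = (4+4\sqrt{2})\sqrt{\beta_E n}\,(1+o(1))$, and your estimate $\mu(\sqrt{s}L)=s\mu(L)\geq s$ is exactly the paper's inequality $s\leq\mu(sf)$. One step, however, is stated too strongly: from the absence of bad lattices of rank \emph{exactly} $n$ you conclude ``$\phi_{\mathcal O}^*(s) > n$,'' but $\phi_{\mathcal O}^*(s)$ is a minimum over all ranks, so you must also rule out bad lattices of every rank $m\leq n$. The repair is immediate: $G_E$ is increasing in $n$, so $s\geq G_E(n)\geq G_E(m)$ for $2\leq m\leq n$ and the same application of Proposition \ref{prop1} handles each such rank, while in rank $1$ an integral form is $a\,x_1x_1^*$ with $a$ a positive rational integer, so $sa$ is trivially a sum of norms (of $sa$ copies of $1$). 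Alternatively you can sidestep the issue the way the paper does: if some rank-$n$ lattice $L$ has $\sqrt{s}L\notin\Sigma_{\mathcal O}^*(n)$, then Proposition \ref{prop1} forces $s\leq\mu(sf)< G_E(n)$, so \emph{every} $n$ in the set defining $\phi_{\mathcal O}^*(s)$, in particular its minimum, satisfies $n\geq\left(\frac{\ln s}{k_E}\right)^2(1+o(1))$. With either patch, your bookkeeping for the inversion (the subdominant terms $2(\ln n)^2$, $10\ln n$, $2(1+\Sigma(n))\ln|d_E|$ being $o(\ln s)$ after substituting $n\approx(\ln s)^2/k_E^2$) is correct.
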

\begin{proof}
If $f$ is a positive definite hermitian form in $n$ variables which is not $s$-integrable, then $sf$ cannot be written as a sum of norms.  Then, by Proposition \ref{prop1}, we have
$$s \leq \mu(sf) \leq G_E(n) = O\left(e^{(k_E + \epsilon)\sqrt{n}}\right)\quad \mbox{ for all } \epsilon > 0,$$
where $k_E = (4 + 4\sqrt{2})\sqrt{\beta_E}$.  This can also be written as $s \leq e^{k_E\sqrt{n}(1 + o(1))}$ or $\ln s \leq k_E(\sqrt{n}(1 + o(1))$, whence
$n \geq \left(\frac{\ln s}{k_E} \right)^2 (1 + o(1))$.  The theorem now follows from the definition of $\phi_{\mathcal O}^*(s)$.
\end{proof}

\appendix

\section{Finiteness of $g_{\mathcal O}^*(n)$}

Let $E/F$ be a CM extension, $*$ be the nontrivial automorphism in $\text{Gal}(E/F)$, and $\mathcal O$ be the ring of integers in $E$.
Every hermitian form discussed in this appendix is defined with respect to $*$.  We will once again adopt the geometric language of hermitian
spaces and lattices used in Section \ref{neighbors}.    Let $\Sigma^*_{\mathcal O}(n)$ be the set of integral hermitian lattices which are
represented by sums of norms, and define
$$g_{\mathcal O}^*(n) = \min\{g : \mbox{ every hermitian lattice in } \Sigma_{\mathcal O}^*(n) \mbox{ is represented by } I_g\}.$$
This definition matches the one we introduced in Section \ref{introduction} since every lattice is free when $\mathcal O$ is a PID, and
isometry classes of free lattices correspond to equivalence classes of hermitian forms.  We will give a brief explanation of why $g_{\mathcal O}^*(n)$
is finite.   The argument is essentially the same as the one used in \cite[Proposition 6]{Icaza96} to show that $g_{\mathcal O}(n)$ is finite when
$\mathcal O$ is the ring of integers of a totally real number field.\footnote{There is a couple of small missteps in the proof of \cite[Proposition 6]{Icaza96}.
First, Humbert's reduction does not guarantee that the minimum of a positive definite integral quadratic form is attained at a unimodular vector when
$\mathcal O$ is not a PID.  Second, $g_{\mathcal O}(1)$ is not bounded above by 5 as claimed; as a matter of fact $g_{\mathcal O}(1)$, although it is
always finite, can be made arbitrary large, see \cite{Scharlau79}.}

Suppose that $N$ is a lattice of rank $n$ which is represented by $I_r$ for some $r$.  We may assume that $N$ is positive definite.
Let $\mu(N)$ be $\min\{\text{Tr}_{F/\mathbb Q}(h(x)): 0 \neq x \in N\}$. By the local representation theory of hermitian lattices
(\cite{Gerstein2} and \cite{Johnson66}), we see that $N$ is represented by the genus of $I_{2n + 1}$.  Let $c = c(2n+1)$ be the constant in
\cite[Theorem 2.12]{JP94} for the lattice $I_{2n+1}$.  If $\mu(N) \geq c$, then $N$ is already represented by $I_{2n+1}$.

Suppose that $\mu(N) < c$.  Then there exists nonzero vector $v \in N$ such that $0 < \text{Tr}_{F/\mathbb Q}(h(v)) < c$.
Let $\sigma$ be a representation of $N$ by $I_r$, and for any $1\leq i \leq r$, let $\phi_i$ be the $i$-th coordinate function of $I_r$.
Since $\text{Tr}_{F/\mathbb Q}(h(v)) = \sum_{i=1}^r \text{Tr}_{F/\mathbb Q}(\phi_i(v)^*\phi_i(v))$ and all
$\text{Tr}_{F/\mathbb Q}(\phi_i(v)^*\phi_i(v))$ are nonnegative rational integers, there are at most $\lfloor c \rfloor$ of the $\phi_i(v)$ are nonzero.
Without loss of generality, we may assume that $\phi_i(v) = 0$ for $\lfloor c \rfloor + 1 \leq i \leq r$.
Let $\sigma_1 = (\phi_1, \ldots, \phi_{\lfloor c \rfloor})$ and $\sigma_2 = (\phi_{\lfloor c \rfloor + 1}, \ldots, \phi_r)$
so that $\sigma = (\sigma_1, \sigma_2)$.

There are fractional ideal $\mathfrak a$ of $E$ and sublattice $N'$ of  $N$ such that $N = \mathfrak a v \oplus N'$.
Then $\sigma_2(N) = \sigma_2(N')$, which is a sublattice of rank at most $n - 1$ in $I_{r - \lfloor c \rfloor}$.
Therefore, if $g_{\mathcal O}^*(n-1)$ is finite, then $\sigma_2(N)$ is represented by some $I_s$ with $s \leq g_{\mathcal O}^*(n-1)$.
Let $\tau$ be a representation of $\sigma_2(N)$ by $I_s$.  Then $\sigma': = (\sigma_1, \tau\circ\sigma_2)$ is a representation of $N$
by $I_{s + \lfloor c \rfloor}$, which means that $g_{\mathcal O}^*(n) \leq g_{\mathcal O}^*(n-1) + \lfloor c \rfloor$.

Thus, we need to show that $g_{\mathcal O}^*(1)$ is finite.  By \cite[Theorem 2.12]{JP94} again, there exists a constant $d$ such that
a positive definite lattice $L$ of rank 1 is represented by $I_3$ provided that $\mu(L) \geq d$.  There are only finitely many isometry
classes of positive definite rank 1 lattices whose minima are smaller than $d$.  It follows that $g_{\mathcal O}^*(1)$ is finite.

\end{document}